\newtheorem{proposition}{Proposition}[section]
\newtheorem{theorem}{Theorem}[section]
\newtheorem{definition}{Definition}
\newtheorem{lemma}{Lemma}[section]
\newtheorem{corollary}{Corollary}[section]
\newtheorem{remark}{Remark}
\newtheorem{example}{Example}[section]
\numberwithin{equation}{section}
\def\R{\mathbb{R}}
\def\vp{\varphi}
\def\lv{\lVert}
\numberwithin{equation}{section}
\begin{document}
	\title[Boundedness in $L_p$ spaces for  Hartley-Fourier convolutions \&  applications]{Boundedness in $L_p$ spaces for the Hartley-Fourier \\convolutions operator and their applications}
	\thanks{Accepted 02 May 2023 by \textit{J. Math. Sci.}}
	\thanks{Published
		11 July 2023}
	\author[Trinh Tuan]{Trinh Tuan}
	\thanks{E-mail: \textit{\bf tuantrinhpsac@yahoo.com}}

	\maketitle	
\begin{center}
Department of Mathematics, Electric Power University,\\
235-Hoang Quoc Viet Road, Hanoi, Vietnam.\\
E-mail: \textit{tuantrinhpsac@yahoo.com}
\end{center}
\begin{abstract}
The paper deals with $L_p$-boundedness of the Hartley-Fourier convolutions operator and their applied aspects. We establish various new Young-type inequalities and obtain the structure of a normed ring in Banach space when equipping it with such convolutional multiplication. Weighted $L_p$-norm inequalities of these convolutions are also considered. As applications, we investigate the solvability and the bounded $L_1$-solution  of a class of Fredholm-type integral equations and linear Barbashin's equations with the help of factorization identities of such convolutions. Several examples are provided to illustrate the obtained results to ensure their validity and applicability.
\vskip 0.3cm
\noindent\textbf{Keywords}. $L_p$-Boundedness. Convolutions. Hartley--Fourier transforms.    Young's inequalities. Wiener-Tauberian's theorem. Barbashin's equations. Fredholm's integral equations.  
\vskip 0.3cm

\noindent \textbf{Mathematics Subject Classification.} 42A38. 44A35. 45E10. 45J05. 26D10
\end{abstract}
\section{Introduction}
Integral inequalities are fundamental tools in the study of qualitative as well as quantitative properties of integral transformations and solutions of differential equations. In particular, the convolution inequality is indispensable because there are so many integral transformations and solutions of differential equations that are represented as convolutions (see \cite{Debnath2006Bhatta}). Certainly among all the convolutional transformations, the best known is the Fourier convolution. In 1912, Young. W. H introduced Young's inequality for the bounded estimation of the Fourier convolution (refer \cite{YoungWH1912}), which is 
$
\lv (f\underset{F}{*}g)\lv_{L_r (\R) }\ \leq \lv f\lv_{L_p (\R)}\ \lv g\lv_{L_q (\R)},
$ where $p, q, r >1$ such that $\frac{1}{p} +\frac{1}{q}=1+\frac{1}{r}$. R.A. Adams  and J.F. Fournier generalized Young's inequality for the Fourier convolution (refer to Theorem 2.24, page 33, in \cite{AdamsFournier2003sobolev}) to include a weight
$$
\bigg|\int\limits_{\R^n} (f\underset{F}{*}g)(x).\ \omega(x) dx \bigg|\leq \lv f\lv_{L_p (\R^n)} \lv g\lv_{L_q (\R^n)}\lv \omega\lv_{L_r (\R^n)},
$$
where  $p, q, r$ are real numbers in $(1,\infty)$ such that $\frac{1}{p} +\frac{1}{q}+\frac{1}{r}=2$ and $f \in L_p (\R^n), g\in L_q (\R^n), \omega \in L_r (\R^n)$ with the Fourier convolution defined by 
$(f \underset{F}{*}g)(x):=\frac{1}{\sqrt{2\pi}}\int\limits_{\R}f(y)\ g(x-y)dy,\ x\in \R$, then Young's inequality is seen as a consequence of Young's theorem  \cite{Sneddon1972}.   Nevertheless, Young's inequality does not hold for the important case $f, g \in L_2 (\R)$. Based on the technique in \cite{AdamsFournier2003sobolev,hoangtuan2017thaovkt,tuan2020Ukrainian} and derived from the previous results for Hartley--Fourier convolutions \cite{giang2009MauTuan,ThaoHVA2014MMA,Tuan2022MMA}, we  study the  $L_p$ norm estimates of these convolutions and applicabilities, which are also the main contributions of this paper. The obtained results are new Young type inequalities with their important applications in solving classes of  integro-differential equations. The applications aspect will be detailed here, for a large class of integral operators, with the results that will exemplify three different topics:

 \textbf{(i)} Non-commutative  normed ring structure on Banach space $L_1 (\R)$;
 
\textbf{(ii)} Solvability and boundedness of $L_1$-solutions of Fredholm's integral equation of the second kind;

\textbf{(iii)} Solvability and boundedness of $L_1$-solutions for Barbashin's equations \& Cauchy-type problem.\\ In addition, it should be noted that the factorization property of Hartley-Fourier convolutions is crucial in solving corresponding convolution-type equations \cite{Bracewell1986,ThaoHVA2014MMA,Tuan2022MMA}. It is also clear that convolution-type equations are very often used in the modeling of a broad range of different
problems \cite{castro2019NMTuan,castro2013,tuan2022Mediterranean}, so additional knowledge about their solvability is very welcome.

This paper is divided into five sections including the introduction and the remaining four parts. Section 2 is devoted to the presentation of the notions of Fourier and Hartley's transforms, and the relationship between them.  We also recall the definitions of Hartley, Hartley-Fourier convolutions and some results for factorization properties of these convolutions. We also prove that the assertion of Wiener-L\'evy's theorem holds for the Hartley transform (see Lemma \ref{WienerLevyforHartley}). Section 3 consists of three subsections. In Subsection 3.1, we prove Young's Theorem of the Hartley-Fourier generalized convolution \eqref{eq2.4} and corollaries for estimation in $L_1(\R)$ space. The general formulation of  Young-type inequality for the Hartley convolutions \eqref{eq2.5} is stated in Subsection 3.2. In Subsection 3.3, using H\"older's inequality, Fubini's theorem, and integral transforms, we establish norm inequalities in weighted $L_p (\R, \rho_j)$ spaces of each above convolutions. Section 4 deals with applications of the constructed convolutions. In Subsection 4.1, the linear space $L_1(\R)$,
equipped with the above-mentioned convolutions, becomes a normed ring, having no unit, which is non-commutative, and could be used in the theories of Banach algebra. Subsections 4.2 and 4.3 contain the most important results of this section where Fredholm's integral equations of the second kind and Barbashin's equation are considered simultaneously. By using the constructed convolutions together with the help of Wiener-L\'evy theorem and Schwartz's function classes, we provide necessary and sufficient conditions for the solvability of the integral equations of convolution type and obtain explicit $L_1$-solutions. The most obvious differences when solving the problems of subsections 4.2 and 4.3 in $L_1(\R)$ and the previous results in \cite{tuan2021korean,Tuan2022MMA} are that the  Fourier, Hartley transforms $L_2 (\R)\longleftrightarrow L_2 (\R)$ are an isometric (unitary) isomorphism \cite{Sogge1993fourier}, meanwhile it is no longer true for $L_1 ( \R) $. Furthermore, in the space $L_1(\R)$, Theorem (68, page 92, in \cite{Titchmarsh1986}) is no longer true. To overcome that, we use simultaneously the density in $L_1 (\R)$ of Schwartz's function classes \cite{WRudin1987}, Wiener-L\'evy Lemma \ref{WienerLevyforHartley}, and factorization properties. Subsection 4.4 is concerned with the Cauchy-type problem. Techniques used here to check if the solution of the problem satisfies the initial value theorem or not come from \cite{NaimarkMA1972} and \cite{Winer1932}. Namely, we follow closely the strategy  of Wiener-Tauberian's theorem. Some computational examples can be found in the final section of  the paper to illustrate the obtained results  to ensure their validity and applicability.

\section{Preliminaries}
We recall some notions and results coming from \cite{bateman1954,Bracewell1986,giang2009MauTuan,NaimarkMA1972,Poularikas1996,ThaoHVA2014MMA,Titchmarsh1986,tuan2022Mediterranean,Tuan2022MMA}.
We denote by (F) the Fourier integral transform (\cite{bateman1954,Titchmarsh1986}) of a function $f$ defined by
$(Ff)(y)=\frac{1}{\sqrt{2\pi}}\int\limits_{\R} e^{-ixy} f(x)dx$ with $y \in \R,$ where $e^{-ixy}=\frac{1+i}{2}\operatorname{Cas}(xy)+\frac{1-i}{2}\operatorname{Cas}(-xy).$ The Hartley transform was proposed as an alternative to the Fourier transform by R. V. L. Hartley in 1942 (see \cite{Bracewell1986}). The Hartley transform is an alternate means of analyzing
a given function in terms of its sinusoids. This transform is its own inverse and an efficient computational tool
in case of real-value functions. The Hartley transform
of a function is a spectral transform and can be obtained
from the Fourier transform by replacing the exponential
kernel $exp(-ixy)$ by $\operatorname{Cas}(-xy)$. The Hartley transform of a function $f$ can be expressed as either
\begin{equation}\label{eq2.1}
\big(H_{\left\{\begin{smallmatrix} 
	1\\ 
	2
	\end{smallmatrix} \right\}}f\big)(y)=\frac{1}{\sqrt{2\pi}} \int_{\R}
f(x) \operatorname{Cas}(\pm xy)dx, \ y \in \R, \end{equation}	
where $\operatorname{Cas}(\pm xy)=\cos(xy) \pm \sin (xy)$. Thus, it is obvious that we have the following correlation between the Hartley transform and the Fourier transform as follows
\begin{equation}\label{eq2.2}
\big(H_{\left\{\begin{smallmatrix} 
	1\\ 
	2
	\end{smallmatrix} \right\}}f\big)(y)= F\left(\dfrac{1\pm i}{2}f(x)+\dfrac{1\mp i}{2}f(-x)\right)(y),
\end{equation} and
\begin{equation}\label{eq2.3}
(Ff)(y)=H_{\left\{\begin{smallmatrix} 
	1\\ 
	2
	\end{smallmatrix} \right\}}\left(\dfrac{1-i}{2}f(\pm x)+\dfrac{1+i}{2} f(\mp x) \right)(y).
\end{equation}
Actually, the Hartley transform can be applied to
approximate the continuous transformation of a non-periodic signal of finite duration. Being a real-valued function, it has some computational advantages such as memory efficiency and faster harmonic solutions over the Fourier transform which is a complex tool, readers refer to details in \cite{Bracewell1986,Poularikas1996}. Consequently, the
method based on the Hartley transform is helpful to estimate harmonic effects for inrush current by giving a unified iterative solution that exhibits good convergence.
\begin{definition}
	The following generalized convolution related to  Hartley-Fourier $(H_1 ,F)$ transforms was introduced
	recently (see \cite{ThaoHVA2014MMA,Tuan2022MMA}) defined by
	\begin{equation}\label{eq2.4}
	\big(f\underset{H_1,F}{\ast}g\big)(x):=\frac{1}{2\sqrt{2\pi}}\int_{\R} g(y)[f(x+y)+f(x-y)+if(-x-y)-if(-x+y)]dy,\quad \forall x\in\mathbb{R}.
	\end{equation}For brevity, we call \eqref{eq2.4} it the Hartley-Fourier generalized convolution.\end{definition}
\begin{definition}\label{dnTichchapHartley}
	The convolution of two functions $f$ and $g$ for the Hartley $H_{\left\{\begin{smallmatrix} 
		1\\ 
		2
		\end{smallmatrix} \right\}}$  transform (see \cite{giang2009MauTuan,Tuan2022MMA}) is defined by 
	\begin{equation}\label{eq2.5}
	\big(f\underset{H_{\left\{\begin{smallmatrix} 
			1\\ 
			2
			\end{smallmatrix} \right\}}}{\ast} g\big) (x):= \frac{1}{2\sqrt{2\pi}}\int_{\R} f(y)[g(x+y) + g(x-y) + g(-x+y) - g(-x-y)] dy,\quad  \forall x\in\mathbb{R}.
	\end{equation}For brevity, we call \eqref{eq2.5} it the Hartley convolution.\end{definition}
\begin{proposition}
	\textup{i)  \cite{ThaoHVA2014MMA}.} Let $f,g$ be the functions belonging to $L_1 (\R)$, then Hartley-Fourier generalized convolution is $ \big(f\underset{H_1,F}{\ast}g\big)(x) \in L_1 (\R)$. Furthermore, the factorization equality \eqref{eq2.6} is valid.
	\begin{equation}\label{eq2.6}
	H_1\big(f\underset{H_1,F}{\ast}g\big)(y)=(H_1f)(y)(Fg)(y),\quad y\in\mathbb{R}.
	\end{equation}
	\textup{ii) \cite{giang2009MauTuan,Tuan2022MMA}.} Suppose that $f,g \in L_1 (\R)$,  then Hartley convolution \eqref{eq2.5} is well-defined, which  means that $ \big(f\underset{H_{\left\{\begin{smallmatrix} 
			1\\ 
			2
			\end{smallmatrix} \right\}}}{\ast} g\big)$ belongs to $L_1 (\R)$. And the following factorization equality 
	\begin{equation}\label{eq2.7}
	H_{\left\{\substack{1\\2}\right\}}\big(f\underset{H_{\left\{\substack{1\\2}\right\}}}{\ast} g\big) (y)=\big(H_{\left\{\substack{1\\2}\right\}}f\big)(y)\big(H_{\left\{\substack{1\\2}\right\}} g\big) (y),\quad y\in\mathbb{R},
	\end{equation} holds true for any $f,g \in L_1 (\R)$.
\end{proposition}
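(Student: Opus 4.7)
I would prove the two parts in parallel, since each reduces to an $L_1$-norm estimate via Fubini together with a factorization identity. For the inclusion $\big(f\underset{H_1,F}{\ast}g\big)\in L_1(\R)$ in (i), write $h(x)$ for the convolution defined by \eqref{eq2.4}. Taking absolute values inside the integrand, integrating $|h(x)|$ over $\R$, and applying Fubini (justified a posteriori by the finiteness of the resulting iterated absolute integral), I obtain
\begin{equation*}
\lv h\lv_{L_1(\R)}\leq \tfrac{1}{2\sqrt{2\pi}}\int_{\R}|g(y)|\int_{\R}\bigl[|f(x+y)|+|f(x-y)|+|f(-x-y)|+|f(-x+y)|\bigr]dx\,dy.
\end{equation*}
Each inner $x$-integral equals $\lv f\lv_{L_1(\R)}$ by the translation and reflection invariance of Lebesgue measure, yielding $\lv h\lv_{L_1(\R)}\leq \tfrac{2}{\sqrt{2\pi}}\lv f\lv_{L_1(\R)}\lv g\lv_{L_1(\R)}$. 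The same recipe handles the membership part of \eqref{eq2.5} in (ii).

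For the factorization \eqref{eq2.6}, my plan is to bypass any direct computation on the $\operatorname{Cas}$ kernel by reducing to the classical Fourier factorization via the bridge \eqref{eq2.2}. Setting $f_+(x):=\tfrac{1+i}{2}f(x)+\tfrac{1-i}{2}f(-x)$, the bridge gives $(H_1f)(y)=(Ff_+)(y)$. Applying \eqref{eq2.2} to $h$ as well, the target \eqref{eq2.6} reduces to the pointwise statement
\begin{equation*}
h_+(x):=\tfrac{1+i}{2}h(x)+\tfrac{1-i}{2}h(-x)=(f_+\underset{F}{\ast}g)(x),
\end{equation*}
after which the standard Fourier factorization $F(f_+\underset{F}{\ast}g)=(Ff_+)(Fg)$ on $L_1(\R)$ finishes the argument. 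I would verify this by substituting \eqref{eq2.4} into the left-hand side, expanding, and grouping the coefficients of the four basic terms $f(\pm x\pm y)$: the weights of $f(x+y)$ and $f(-x-y)$ collapse to zero, while those of $f(x-y)$ and $f(-x+y)$ combine into $(1+i)f(x-y)+(1-i)f(-x+y)=2f_+(x-y)$, exactly reproducing the Fourier convolution of $f_+$ and $g$ with the correct normalization.

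Part (ii) admits a similar but more direct treatment since both factors on the right of \eqref{eq2.7} live in the Hartley world. I would apply $H_{\left\{\substack{1\\2}\right\}}$ to \eqref{eq2.5}, interchange the $x$- and $y$-integrations by Fubini, and translate $x\mapsto x\mp y$ inside each of the four summands; the $\operatorname{Cas}$ addition identities $\operatorname{Cas}(a+b)=\cos(a)\operatorname{Cas}(b)+\sin(a)\operatorname{Cas}(-b)$ together with their sign-swapped partners then show that the $\sin$-cross-terms cancel thanks to the signs $(+,+,+,-)$ built into \eqref{eq2.5}, leaving a scalar multiple of $\operatorname{Cas}(\pm xy)\bigl(H_{\left\{\substack{1\\2}\right\}}g\bigr)(y)$ inside the brackets. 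Restoring the $f$-integration yields the product $\bigl(H_{\left\{\substack{1\\2}\right\}}f\bigr)(y)\bigl(H_{\left\{\substack{1\\2}\right\}}g\bigr)(y)$.

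The main obstacle I expect is the coefficient bookkeeping in (i): the four summands of \eqref{eq2.4} carry the complex weights $(1,1,i,-i)$, and forming $h_+$ produces sixteen subterms before cancellation, so a sign slip is easy. Routing the argument through the symmetrized identity $h_+=f_+\underset{F}{\ast}g$ confines that bookkeeping to a single algebraic verification and defers the heavy lifting to the standard Fourier factorization, which seems substantially cleaner than opening the $\operatorname{Cas}$ kernel of $H_1$ directly and entangling the trigonometric identities with Fubini in a single calculation.
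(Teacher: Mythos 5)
The paper itself offers no proof of this proposition: it is recalled in the Preliminaries with citations to \cite{ThaoHVA2014MMA} for part (i) and \cite{giang2009MauTuan,Tuan2022MMA} for part (ii), so there is no in-paper argument to compare yours against, and your proposal must stand on its own. It does. The $L_1$-membership step is the standard Fubini--translation-invariance argument, and your constant $\tfrac{2}{\sqrt{2\pi}}=\sqrt{2/\pi}$ agrees with what the paper later records as \eqref{3.9} and \eqref{3.12}. For the factorization in (i), your reduction through the bridge \eqref{eq2.2} is sound, and the coefficient bookkeeping you flag as the main risk checks out: with $f_+(x)=\tfrac{1+i}{2}f(x)+\tfrac{1-i}{2}f(-x)$, the weight of $f(x+y)$ in $h_+$ is $\tfrac{1+i}{2}+\tfrac{1-i}{2}(-i)=0$, that of $f(-x-y)$ is $\tfrac{1+i}{2}i+\tfrac{1-i}{2}=0$, while $f(x-y)$ and $f(-x+y)$ pick up $1+i$ and $1-i$, giving $2f_+(x-y)$ and hence $h_+=g\underset{F}{\ast}f_+=f_+\underset{F}{\ast}g$ (note commutativity of the Fourier convolution is used in that last step, since the expansion naturally produces $\tfrac{1}{\sqrt{2\pi}}\int_{\R}g(y)f_+(x-y)\,dy$). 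This is genuinely in the spirit of the paper: its proof of Lemma \ref{WienerLevyforHartley} transfers Wiener--L\'evy from Fourier to Hartley by exactly the same substitutions \eqref{eq2.2}--\eqref{eq2.3}, so your route for \eqref{eq2.6} is the natural companion argument, and it is cleaner than a direct attack on the $\operatorname{Cas}$ kernel. One small imprecision in (ii): using $\operatorname{Cas}(a+b)=\cos(a)\operatorname{Cas}(b)+\sin(a)\operatorname{Cas}(-b)$ (which is correct), what actually cancels under the sign pattern $(+,+,+,-)$ are the four terms carrying the reflected kernel $\operatorname{Cas}(-tu)$; the surviving terms are $2\cos(yu)\operatorname{Cas}(tu)+2\sin(yu)\operatorname{Cas}(tu)=2\operatorname{Cas}(yu)\operatorname{Cas}(tu)$, i.e.\ the $\sin$ part does not cancel but recombines with the $\cos$ part into the $\operatorname{Cas}(yu)$ factor. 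Your phrasing ``the $\sin$-cross-terms cancel'' should be read in that restricted sense; with that reading, restoring the $f$-integration indeed yields \eqref{eq2.7}, and Fubini is justified throughout since $|\operatorname{Cas}|\leq\sqrt{2}$ and $f,g\in L_1(\R)$.
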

We need the following auxiliary lemmas. The Wiener-Levy's Theorem (refer \cite{NaimarkMA1972}) for the Fourier transform stated that if $\xi \in L_1(\R)$, then $1+(F\xi)(y)\neq 0$ for any $y \in \R$ is a necessary and sufficient condition for the existence of a function $k $  belonging to $L_1 (\R)$ such that  $(Fk)(y)=\frac{(F\xi)(y)}{1+(F\xi)(y)}$, where $\xi \in L_1(\R)$.  This theorem still holds true for the Hartley transform and we will detail it below.
\begin{lemma}\label{WienerLevyforHartley}\textup{(\textbf{Wiener-L\'evy's Theorem for Hartley transform})} For the Hartley transform is defined by \eqref{eq2.1} and suppose that 
	$\varphi \in L_1(\mathbb R)$, then $1+\big(H_{\left\{\substack{1\\2}\right\}} \varphi\big)(y)\ne 0$, $\forall y\in\mathbb R$ is a necessary and sufficient condition for the existence of a function $\ell\in L_1(\mathbb R)$ such that
	$
	\big(H_{\left\{\substack{1\\2}\right\}} \ell\big)(y)=\frac{\big(H_{\left\{\substack{1\\2}\right\}} \varphi\big)(y)}{1+\big(H_{\left\{\substack{1\\2}\right\}}\varphi\big)(y)},
	$ for any $y \in \R$.
\end{lemma}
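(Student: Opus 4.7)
The plan is to reduce the claim to the classical Wiener--L\'evy theorem for the Fourier transform (stated just above the lemma) by exploiting the linear change of ``kernel'' given by \eqref{eq2.2} and \eqref{eq2.3}. Roughly, the Hartley transform of a function is the Fourier transform of an explicit $L_1$ combination of the function and its reflection, and conversely. This allows the Fourier statement to be transported to the Hartley side.

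Necessity is immediate: if such $\ell\in L_1(\mathbb{R})$ exists, then for every $y\in\mathbb{R}$ one has
\begin{equation*}
\bigl(H_{\left\{\substack{1\\2}\right\}}\ell\bigr)(y)\bigl(1+\bigl(H_{\left\{\substack{1\\2}\right\}}\varphi\bigr)(y)\bigr)=\bigl(H_{\left\{\substack{1\\2}\right\}}\varphi\bigr)(y),
\end{equation*}
so a zero $y_0$ of $1+H_{\left\{\substack{1\\2}\right\}}\varphi$ would force $\bigl(H_{\left\{\substack{1\\2}\right\}}\varphi\bigr)(y_0)=-1$ and simultaneously $0=-1$, a contradiction.

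For sufficiency, I would set
\begin{equation*}
\widetilde{\varphi}(x):=\tfrac{1\pm i}{2}\,\varphi(x)+\tfrac{1\mp i}{2}\,\varphi(-x),
\end{equation*}
with the sign chosen according to whether we are dealing with $H_1$ or $H_2$. Since $\varphi\in L_1(\mathbb{R})$ and translation/reflection preserve $L_1$, clearly $\widetilde{\varphi}\in L_1(\mathbb{R})$; by \eqref{eq2.2} we have $\bigl(H_{\left\{\substack{1\\2}\right\}}\varphi\bigr)(y)=(F\widetilde{\varphi})(y)$. Hence the standing assumption becomes $1+(F\widetilde{\varphi})(y)\neq 0$ for every $y\in\mathbb{R}$, and the classical Wiener--L\'evy theorem for the Fourier transform (cited from \cite{NaimarkMA1972}) yields a function $k\in L_1(\mathbb{R})$ with
\begin{equation*}
(Fk)(y)=\frac{(F\widetilde{\varphi})(y)}{1+(F\widetilde{\varphi})(y)}=\frac{\bigl(H_{\left\{\substack{1\\2}\right\}}\varphi\bigr)(y)}{1+\bigl(H_{\left\{\substack{1\\2}\right\}}\varphi\bigr)(y)}.
\end{equation*}
To go back to the Hartley side, I would apply \eqref{eq2.3} with $f=k$ and define
\begin{equation*}
\ell(x):=\tfrac{1-i}{2}\,k(\pm x)+\tfrac{1+i}{2}\,k(\mp x),
\end{equation*}
again in $L_1(\mathbb{R})$, so that $\bigl(H_{\left\{\substack{1\\2}\right\}}\ell\bigr)(y)=(Fk)(y)$, which is exactly the identity sought.

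The only real obstacle is verifying that the two linear substitutions above are genuinely inverse to each other at the level of the transforms, i.e.\ that the chain ``$\varphi\mapsto\widetilde{\varphi}\mapsto k\mapsto\ell$'' produces a function whose Hartley transform coincides with the prescribed quotient rather than merely differing from it by a reflection or a sign change. This is handled by reading \eqref{eq2.2} and \eqref{eq2.3} with matched upper/lower sign choices (the formulas are set up precisely so that $H_{\left\{\substack{1\\2}\right\}}$ and $F$ intertwine via the same pair of coefficients $\tfrac{1\pm i}{2},\tfrac{1\mp i}{2}$), and the calculation is elementary once the correct sign convention is fixed; no hard analysis beyond the invocation of the Fourier Wiener--L\'evy theorem is required.
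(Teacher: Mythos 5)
Your proposal is correct and takes essentially the same route as the paper's own proof: both reduce the lemma to the classical Wiener--L\'evy theorem for the Fourier transform by passing to $\widetilde{\varphi}(x)=\tfrac{1\pm i}{2}\varphi(x)+\tfrac{1\mp i}{2}\varphi(-x)$ and back via $\ell(x)=\tfrac{1-i}{2}k(\pm x)+\tfrac{1+i}{2}k(\mp x)$, using the intertwining relations \eqref{eq2.2} and \eqref{eq2.3} so that $\big(H_{\left\{\substack{1\\2}\right\}}\varphi\big)(y)=(F\widetilde{\varphi})(y)$ and $\big(H_{\left\{\substack{1\\2}\right\}}\ell\big)(y)=(Fk)(y)$. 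Your standalone pointwise argument for necessity is a minor, harmless variation on the paper, which instead carries the full biconditional through the Fourier-side statement.
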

\begin{proof}
We put                               
	$
	\xi(x):=\frac{1\pm i}{2} \varphi(x) + \frac{1\mp i}{2}\varphi(-x).
	$
	This implies that $\xi\in L_1(\mathbb R)$  if and only if $\varphi$ belongs to $L_1(\mathbb R)$. Hence, from the correlation between Hartley and Fourier transforms \eqref{eq2.2}, we obtain
	$
	(F\xi)(y)=\big(H_{\left\{\substack{1\\2}\right\}}\varphi\big) (y), \forall y\in\mathbb R.
	$ Applying Wiener--Levy's theorem to the Fourier (F) transform (see\cite{NaimarkMA1972}), we have $1+(F\xi)(y)\ne 0$,  which implies that $1+\big(H_{\left\{\substack{1\\2}\right\}}\varphi\big)(y)\ne 0$, $\forall y\in\mathbb R$ is a necessary and sufficient condition for the existence of a function $k\in L_1(\mathbb R)$ such that
	\begin{equation}\label{1}
	(Fk)(y) = \frac{(F\xi)(y)}{1+(F\xi)(y)} = \frac{\big(H_{\left\{\substack{1\\2}\right\}}\varphi\big)(y)}{1+\big(H_{\left\{\substack{1\\2}\right\}}\varphi\big)(y)},\forall y\in\mathbb R.
	\end{equation}
	\noindent Conversely, if set $
	\ell(x):= \frac{1-i}{2}k(\pm x) + \frac{1+i}{2}k(\mp x),
	$ by the same above argument, we deduce $\ell$ belongs to the $L_1(\mathbb R)$ if and only if $k\in L_1(\R)$ and by formula \eqref{eq2.3}, we obtain 
	\begin{equation}\label{2}
	\big(H_{\left\{\substack{1\\2}\right\}} \ell\big)(y)=(Fk)(y),\forall y\in\R.
	\end{equation}
	Coupling \eqref{1} with \eqref{2}, we have $
	\big(H_{\left\{\substack{1\\2}\right\}} \ell\big)(y)=(Fk)(y) = \frac{(F\xi)(y)}{1+(F\xi)(y)}=\frac{\big(H_{\left\{\substack{1\\2}\right\}}\varphi\big)(y)}{1+\big(H_{\left\{\substack{1\\2}\right\}}\varphi\big)(y)},\forall y\in \R.
	$
\end{proof}

\begin{lemma}\label{lemma4.1}
	Suppose that $g\in C^2(\mathbb R)\cap L_1(\mathbb R)$ such that $g', g''\in L_1(\mathbb R)$ and $\lim\limits_{|x|\to \infty} g(x)=\lim\limits_{|x|\to \infty} g'(x)=0$. For any function $f$ belongs to $L_1(\mathbb R)$, we obtain
	\begin{equation}\label{42}
	H_{\left\{\substack{1\\2}\right\}}\bigg\{\left(1-\frac{d^2}{dx^2}\right) \big(f\underset{H_{\left\{\substack{1\\2}\right\}}}{\ast}g\big)(x)\bigg\} (y) = (1+y^2) H_{\left\{\substack{1\\2}\right\}} \big(f \underset{H_{\left\{\substack{1\\2}\right\}}}{\ast}g\big) (y),
	\end{equation}
\end{lemma}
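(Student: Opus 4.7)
The plan is to reduce the problem to two independent facts: a derivative-transform rule for the Hartley transform, and the factorization property \eqref{eq2.7}. First I would establish the identity
\[
H_{\left\{\substack{1\\2}\right\}}(g'')(y) = -y^2 \bigl(H_{\left\{\substack{1\\2}\right\}} g\bigr)(y), \qquad y \in \mathbb{R}.
\]
This follows from integration by parts twice against $\operatorname{Cas}(\pm xy)$. The key observation is that $\frac{d}{dx}\operatorname{Cas}(\pm xy) = \pm y \operatorname{Cas}(\mp xy)$, so a second differentiation gives $\frac{d^{2}}{dx^{2}}\operatorname{Cas}(\pm xy) = -y^{2}\operatorname{Cas}(\pm xy)$. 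The boundary terms vanish by the assumed decay $g(x), g'(x) \to 0$ as $|x| \to \infty$, and the two improper integrals are absolutely convergent because $g', g'' \in L_1(\mathbb{R})$.

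Next I would show that the differential operator commutes through the convolution, i.e.
\[
\frac{d^{2}}{dx^{2}}\bigl(f \underset{H_{\left\{\substack{1\\2}\right\}}}{\ast} g\bigr)(x) = \bigl(f \underset{H_{\left\{\substack{1\\2}\right\}}}{\ast} g''\bigr)(x).
\]
This is a differentiation-under-the-integral-sign argument applied to the explicit kernel in \eqref{eq2.5}: each of the four terms $g(x \pm y)$, $g(-x \pm y)$ is twice differentiable in $x$ with derivatives $\pm g''(\cdot)$, and the signs conspire so that the factor $-1$ appearing in front of $g(-x-y)$ is reproduced after two differentiations with respect to $x$. The interchange of $\frac{d^2}{dx^2}$ with the $y$-integral is justified by $f \in L_1(\mathbb{R})$ together with $g'' \in L_1(\mathbb{R})$ (dominated convergence, or equivalently Fubini after integrating the difference quotient).

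With these two ingredients in hand, the result is immediate. Applying $H_{\left\{\substack{1\\2}\right\}}$ to $\left(1 - \tfrac{d^2}{dx^2}\right)(f \ast g)$ and using linearity together with the commutation step gives
\[
H_{\left\{\substack{1\\2}\right\}}\!\left\{\Bigl(1-\tfrac{d^2}{dx^2}\Bigr)\bigl(f \underset{H_{\left\{\substack{1\\2}\right\}}}{\ast} g\bigr)\right\}(y)
= H_{\left\{\substack{1\\2}\right\}}\bigl(f \underset{H_{\left\{\substack{1\\2}\right\}}}{\ast} g\bigr)(y) - H_{\left\{\substack{1\\2}\right\}}\bigl(f \underset{H_{\left\{\substack{1\\2}\right\}}}{\ast} g''\bigr)(y).
\]
By the factorization identity \eqref{eq2.7} applied to both convolutions, and by the derivative rule above applied to $g''$, the second term equals $(H_{\{1,2\}}f)(y)\cdot(-y^2)(H_{\{1,2\}}g)(y) = -y^2 H_{\{1,2\}}(f \ast g)(y)$. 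Combining yields the factor $(1+y^2)$, which is exactly \eqref{42}.

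The main obstacle is the bookkeeping of signs: the Hartley kernel $\operatorname{Cas}(\pm xy)$ swaps sign pattern under a single differentiation (sending $H_1$-type to $H_2$-type and vice versa), and similarly the four-term convolution kernel in \eqref{eq2.5} rearranges under $\frac{d}{dx}$. One must verify that after \emph{two} differentiations everything returns to its original form (with a clean factor $-y^{2}$ on the transform side and with $g''$ replacing $g$ inside the convolution). Once this is done carefully for both choices of sign in $H_{\left\{\substack{1\\2}\right\}}$, the rest is a one-line combination of \eqref{eq2.7} and linearity.
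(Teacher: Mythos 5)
Your proposal is correct and follows essentially the same route as the paper's proof: the derivative rule $H_{\left\{\substack{1\\2}\right\}}(g'')(y)=-y^{2}\big(H_{\left\{\substack{1\\2}\right\}}g\big)(y)$ obtained by two integrations by parts with vanishing boundary terms, the commutation $\frac{d^{2}}{dx^{2}}\big(f\underset{H_{\left\{\substack{1\\2}\right\}}}{\ast}g\big)=\big(f\underset{H_{\left\{\substack{1\\2}\right\}}}{\ast}g''\big)$, and then the factorization identity \eqref{eq2.7} to produce the factor $(1+y^{2})$. The only difference is cosmetic: the paper justifies the differentiation-under-the-integral step by approximating $g$ with Schwartz functions dense in $L_1(\mathbb R)$, whereas you invoke dominated convergence (or the Fubini argument on the integrated difference quotient), which is an equally valid technical variant of the same step.
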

\begin{proof}
	By \eqref{eq2.1} and assumption of $g(x)$, we have	
	\begin{eqnarray}\label{4.3}
	\left(H_{\left\{\substack{1\\2}\right\}}g''(x)\right)(y) &=& \frac{1}{2\sqrt{\pi}}\int_{\R} g''(x) \mathrm{Cas}(\pm xy) dx,\quad y\in \mathbb R\nonumber\\
	&=&\frac{1}{\sqrt{2\pi}}\bigg\{ g'(x) \mathrm{Cas}(\pm xy)\Big|_{-\infty}^\infty \pm y\mathrm{Cas}(\pm xy)g(x)\Big|_{-\infty}^\infty -y^2\int_{\R} g(x)\mathrm{Cas}(\pm xy) dx\bigg\} \nonumber\\
	&=&-y^2\big(H_{\left\{\substack{1\\2}\right\}} g(x)\big) (y).
	\end{eqnarray}
	Since $f, g, g',$ and $g''$ are functions belonging to $L_1 (\R)$, we infer that the following convolutions $\Big(f\underset{H_{\left\{\substack{1\\2}\right\}}}{\ast} g\Big)$; $\Big(f \underset{H_{\left\{\substack{1\\2}\right\}}}{\ast} g'\Big)$; and $\Big(f \underset{H_{\left\{\substack{1\\2}\right\}}}{\ast} g''\Big)$ both belong to $L_1(\mathbb{R})$ (see \cite{ThaoHVA2014MMA,Tuan2022MMA}).  Notation $\mathcal{S}$ is the space of functions $g(x) \in C^{\infty} (\R)$ whose derivative decreases rapidly when $|x|$ tends to $\infty$, it is also known as Schwartz space. Then, the closure of $\mathcal{S} = L_1(\R)$, which means that $\mathcal{S}$ is the dense set in the $L_1 (\R)$ space. Thus, for any $g, g',$ and  $g''$ belong to the $L_1 (\R)$  there exists a sequence of functions $\{ g_n\} \in S$ such that $\{ g_n\}\rightarrow g,\ \{g'_n\} \rightarrow g'$ and $\{g''_n\}\rightarrow g'' $ when n tends to $\infty$. With the above function classes,  together with the assumption that  $g$ is a function belonging to the $L_1 (\R)$ space, the integral in the formula \eqref{eq2.5} is convergent. We continue to change the order of the integration and the differentiation as follow
	$$\begin{aligned}
	\frac{d^2}{dx^2}\big(f \underset{H_{\left\{\substack{1\\2}\right\}}}{\ast} g_n\big) (x)&= \frac{1}{2\sqrt{2\pi}} \int_{-\infty}^\infty f(y)\frac{d^2}{dx^2} \left\{g_n(x+y) + g_n(-x+y)
	g_n(x-y) - g_n(-x-y)\right\} dy\\
	&= \frac{1}{2\sqrt{2\pi}}\int_{-\infty}^\infty f(y)\left\{g''_n(x+y)+g''_n(-x+y)+ g''_n(x-y) - g''_n(-x-y)\right\} dy\\
	&= \big(f\underset{H_{\left\{\substack{1\\2}\right\}}}{\ast} g''_n\big)(x) \in L_1(\mathbb R).
	\end{aligned}$$
	Therefore 
	$
	\lim\limits_{n\to\infty} \frac{d^2}{dx^2}\big(f\underset{H_{\left\{\substack{1\\2}\right\}}}{\ast} g_n\big) (x)=\lim\limits_{n\to \infty} \big(f \underset{H_{\left\{\substack{1\\2}\right\}}}{\ast} g''_n\big)(x).
	$ This implies that $
	\frac{d^2}{dx^2}\big(f \underset{H_{\left\{\substack{1\\2}\right\}}}{\ast} g\big)(x) = \big(f \underset{H_{\left\{\substack{1\\2}\right\}}}{\ast} g''\big) (x)$ belongs to $L_1(\mathbb R).
	$ Coupling factorization equality \eqref{eq2.7} with \eqref{4.3}, we have 
	$$\begin{aligned}
	H_{\left\{\substack{1\\2}\right\}} \bigg\{\frac{d^2}{dx^2}\big(f \underset{H_{\left\{\substack{1\\2}\right\}}}{\ast} g\big)(x)\bigg\}(y)&=H_{\left\{\substack{1\\2}\right\}}\big(f \underset{H_{\left\{\substack{1\\2}\right\}}}{\ast}g''\big) (y)
	=\big(H_{\left\{\substack{1\\2}\right\}}f\big)(y)\big(H_{\left\{\substack{1\\2}\right\}}g''\big)(y)\\
	&=-y^2\big(H_{\left\{\substack{1\\2}\right\}}f\big)(y)\big(H_{\left\{\substack{1\\2}\right\}} g\big)(y)\\
	&= -y^2H_{\left\{\substack{1\\2}\right\}}\big(f \underset{H_{\left\{\substack{1\\2}\right\}}}{\ast} g\big) (y).
	\end{aligned}$$
	From the above equality, we infer 
	the desired conclusion of the lemma.
\end{proof}

Throughout the article, we shall make frequent use of the weighted Lebesgue spaces $L_p (\R , w(x)dx)$, $1\leq p < \infty$ with respect to a positive measure $w(x)dx$ equipped with the norm for which $\lv f \lv_{L_p (\R , w)}=\bigg( \int\limits_\R |f(x)|^p w(x)dx \bigg)^{1/p}<+ \infty.$ If the weighted function $w=1$, then $L_p (\R ,w)\equiv L_p (\R)$. In case $p=\infty$, then the norm of functions is defined by $\lv f \lv_{L_{\infty}(\R)}= \sup\limits_{x\in \R} |f(x)|< +\infty$.

\section{$L_p$-boundedness for the Hartley-Fourier convolutions}

\subsection{Young's theorem for Hartley-Fourier generalized convolution}
\begin{theorem}[\textbf{Young-type Theorem for Hartley-Fourier generalized convolution}]\label{theorem1}
	Let $p,q$, and $r$ be real numbers in open interval $(1,\infty)$ such that $\frac{1}{p}+\frac{1}{q}+\frac{1}{r}=2$.	
	For any functions $g\in L_p(\mathbb{R})$, $f\in L_q(\mathbb{R})$, and $h\in L_r(\mathbb{R})$,  we obtain the following inequality 
	
	\begin{equation}\label{eq3.1}
	\bigg|\int_{\R} \big(f\underset{H_1 ,F}{*} g\big)(x)\cdot h(x)\, dx\bigg|\leq \sqrt{\frac{2}{\pi}} \|g\|_{L_p(\mathbb{R})} \|f\|_{L_q(\mathbb{R})}\|h\|_{L_r(\mathbb{R})},
	\end{equation}
	where $\big(f\underset{H_1 ,F}{*} g\big)$ is defined by \eqref{eq2.4}.
\end{theorem}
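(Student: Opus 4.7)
The plan is to imitate the classical three-function Young argument (as in Adams--Fournier), applying it term-by-term to the four pieces of the kernel in \eqref{eq2.4}. By Fubini's theorem I would first rewrite
\begin{equation*}
\int_{\R}\bigl(f\underset{H_1,F}{\ast}g\bigr)(x)\,h(x)\,dx=\frac{1}{2\sqrt{2\pi}}\int_{\R}\!\!\int_{\R} g(y)\bigl[f(x+y)+f(x-y)+i\,f(-x-y)-i\,f(-x+y)\bigr]h(x)\,dy\,dx,
\end{equation*}
then pass the modulus inside, obtaining four double integrals of the form $\int\!\!\int|f(\varepsilon_1 x+\varepsilon_2 y)||g(y)||h(x)|\,dx\,dy$ with $\varepsilon_1,\varepsilon_2\in\{\pm 1\}$. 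Reflections in $x$ and/or $y$ preserve the $L_p$-norms of $g$ and $h$ and reduce each of the four integrals to the canonical form $\int\!\!\int |F(u-v)||G(v)||H(u)|\,du\,dv$ with $\|F\|_{L_q(\R)}=\|f\|_{L_q(\R)}$, $\|G\|_{L_p(\R)}=\|g\|_{L_p(\R)}$, $\|H\|_{L_r(\R)}=\|h\|_{L_r(\R)}$.

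The central estimate to invoke is then the standard three-function Young inequality: under $\tfrac{1}{p}+\tfrac{1}{q}+\tfrac{1}{r}=2$, equivalently $\tfrac{1}{p'}+\tfrac{1}{q'}+\tfrac{1}{r'}=1$ for the conjugate exponents, one has $\int\!\!\int |F(u-v)G(v)H(u)|\,du\,dv\le\|F\|_{L_q(\R)}\|G\|_{L_p(\R)}\|H\|_{L_r(\R)}$. I would derive it by the trilinear H\"older factorization
\begin{equation*}
|F(u-v)\,G(v)\,H(u)|=\bigl[|F(u-v)|^q|G(v)|^p\bigr]^{1/r'}\bigl[|G(v)|^p|H(u)|^r\bigr]^{1/q'}\bigl[|H(u)|^r|F(u-v)|^q\bigr]^{1/p'},
\end{equation*}
checking that the exponents on each of $|F|,|G|,|H|$ add up to $1$ (for instance $\tfrac{q}{r'}+\tfrac{q}{p'}=q(1-\tfrac{1}{q'})=1$), then applying H\"older with exponents $(r',q',p')$ and factoring each of the three resulting double integrals by a translation substitution.

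Summing the four identical contributions and multiplying by the prefactor $\tfrac{1}{2\sqrt{2\pi}}$ produces $\tfrac{4}{2\sqrt{2\pi}}=\sqrt{2/\pi}$, which matches the constant in \eqref{eq3.1}. I expect the main obstacle to be the exponent bookkeeping inside the trilinear H\"older factorization; by contrast, the imaginary coefficients $\pm i$ and the four sign choices $\varepsilon_1,\varepsilon_2=\pm 1$ cause no additional difficulty, since the $\pm i$ disappear upon taking absolute values and the reflections preserve all three $L_p$-norms. The initial use of Fubini is legitimized a posteriori by the finiteness of the bound that the argument produces.
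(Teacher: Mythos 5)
Your proposal is correct, and its engine is the same as the paper's --- the trilinear H\"older factorization with conjugate exponents $p_1,q_1,r_1$ satisfying $\frac{1}{p_1}+\frac{1}{q_1}+\frac{1}{r_1}=1$, followed by translation/reflection substitutions to factor the double integrals --- but the decomposition is organized differently. You apply the triangle inequality \emph{first}, splitting the kernel into the four pieces $f(\pm x\pm y)$, reflect each piece into the canonical form $\iint|F(u-v)G(v)H(u)|\,du\,dv$, and invoke the classical three-function Young inequality (constant $1$) four times, so the constant $\sqrt{2/\pi}=\tfrac{4}{2\sqrt{2\pi}}$ arises from summing four identical contributions. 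The paper instead keeps the four-term combination $|f(x+y)+f(x-y)+if(-x-y)-if(-x+y)|$ intact as a single block, defines the three auxiliary functions $A_1,A_2,A_3$ on $\R^2$ whose product reconstitutes the integrand, applies H\"older once on $\R^2$, and pays for the four-term block through the convexity bound $|a+b+c+d|^q\le 4^{q-1}(|a|^q+|b|^q+|c|^q+|d|^q)$ inside the norm estimates of $A_1$ and $A_2$; the factor $4$ then emerges as $4^{q/p_1}\cdot 4^{q/r_1}=4^{q(1/p_1+1/r_1)}=4$ via the exponent identity $q\bigl(\frac{1}{p_1}+\frac{1}{r_1}\bigr)=1$ --- exactly the bookkeeping identity you verify in your factorization. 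The two routes are quantitatively equivalent (same constant, since $4^{q(1/p_1+1/r_1)}=4\cdot 1$), but yours is more modular: it outsources the core estimate to the known classical inequality and makes transparent that the constant is just (number of kernel terms)$\,\times\,$(prefactor), whereas the paper's block argument is self-contained and generalizes directly to kernels where one prefers not to split (e.g.\ when cancellations matter). Your closing remark on Fubini is handled in the right order provided you read it as Tonelli first: the finiteness of the iterated integral of absolute values both legitimizes the interchange and shows $\bigl(f\underset{H_1,F}{\ast}g\bigr)(x)$ is defined a.e., which is also implicitly how the paper proceeds.
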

\begin{proof}
	Let  $p_1,q_1,r_1$  be the conjugate exponentials of $p,q,r$, respectively. This means that  $\frac{1}{p}+\frac{1}{p_1}=\frac{1}{q}+\frac{1}{q_1}=\frac{1}{r}+\frac{1}{r_1}=1$, together with the assumption of theorem, we get the correlation between exponential numbers as follows
	\begin{equation}\label{eq3.2}
	\left\{\begin{array}{l}
	\frac{1}{p_1}+\frac{1}{q_1}+\frac{1}{r_1}=1,\\
	p\left(\frac{1}{q_1}+\frac{1}{r_1}\right)=q\left(\frac{1}{p_1}+\frac{1}{r_1}\right)=r\left(\frac{1}{p_1}+\frac{1}{q_1}\right)=1.
	\end{array}\right.
	\end{equation}
	For simplicity, we set
	\begin{eqnarray*}
		A_1(x,y)&=&\left|f(x+y) + f(x-y) + if(-x-y) - if(-x+y)\right|^{\frac{q}{p_1}}\cdot|h(x)|^{\frac{r}{p_1}}\in L_{p_1}(\R^2),\\
		A_2(x,y)&=& |f(x+y) + f(x-y) + if(-x-y) - if(-x+y)|^{\frac{q}{r_1}} |g(x)|^{\frac{p}{r_1}} \in L_{r_1}(\R^2),\\
		A_3(x,y)&=& |g(y)|^{\frac{p}{q_1}} |h(x)|^{\frac{r}{q_1}}\in L_{q_1}(\R^2).
	\end{eqnarray*}
	From \eqref{eq3.2}, we deduce that
	\begin{equation}\label{eq3.3}
	\prod_{i=1}^3 A_i(x,y)=|f(x+y) + f(x-y) + if(-x-y) - if(-x+y)|\ |g(x)| |h(x)|,\quad \forall (x,y)\in \R^2.
	\end{equation}
	On the other hand, since $q>1$, then $t^q$ is a convex function, using
	the change of variables theorem, we obtain
	\begin{eqnarray*}
		&&\int_{\R} |f(x+y) + f(x-y) + if(-x-y) - if(-x+y)|^q dy\\ &&\leq 4^{q-1}\int_{\R}\left(|f(x+y)|^q + |f(x-y)|^q + |if(-x-y)|^q + |if(-x+y)|^q\right) dy=4^q\int_{\R} |f(t)|^q dt.
	\end{eqnarray*}
	Based on the assumption of $f\in L_q (\R), h\in L_r (\R)$, using Fubini's theorem,  we obtain  $L_{p_1} (\R^2)$-norm estimation for the operator $A_1 (x,y)$ as follows
	$$\begin{aligned}
		\|A_1\|^{p_1}_{L_{p_1}(\R^2)} &= \int_{\R^2} |f(x+y) + f(x-y) + if(-x-y) - if(-x+y)|^q |h(x)|^r dxdy\nonumber\\
		&\leq 4^q\int_{\R} \biggl\{\int_{\R}|f(t)|^q dt\biggr\}|h(x)|^r dx\nonumber
		= 4^q\bigg(\int_{\R} |f(t)|^q dt\bigg)\bigg(\int_{\R} |h(x)|^r dx\bigg)\nonumber=4^{q}\| f\|^{q}_{L_q(\mathbb{R})} \|h\|^{r}_{L_r(\mathbb{R})}.\end{aligned}$$
	This means that\begin{equation}\label{eq3.4}
	\|A_1\|_{L_{p_1}(\R^2)}= 4^{\frac{q}{p_1}}\| f\|^{\frac{q}{p_1}}_{L_q(\mathbb{R})} \|h\|^{\frac{r}{p_1}}_{L_r(\mathbb{R})}.
	\end{equation}
	Similar to what we did with the evaluation \eqref{eq3.4} of $A_1 (x,y)$, we also get the norm estimation of $A_2$ on $L_{r_1} (\R^2)$ as follows
	\begin{equation}\label{eq3.5}
	\|A_2\|_{L_{r_1}(\R^2)} = 4^{\frac{q}{r_1}} \|f\|_{L_q(\mathbb{R})}^{\frac{q}{r_1}} \|g\|_{L_p(\mathbb{R})}^\frac{p}{r_1}.
	\end{equation}
	And $L_{q_1} (\R^2)$-norm estimation for the operator $A_3$ has the following form
	$$
		\|A_3\|_{L_{q_1}(\R^2)}^{q_1}= \int_{\R^2} |g(y)|^p |h(x)|^r dxdy=\bigg(\int_{\R} |g(y)|^p dy\bigg)\bigg(\int_{\R} |h(x)|^r dx\bigg)=\|g\|^p_{L_p(\mathbb{R})} \|h\|^r_{L_r(\mathbb{R})}.
	$$
	Therefore, we have \begin{equation}\label{eq3.6}
	\|A_3\|_{L_{q_1}(\R^2)}=\|g\|^{\frac{p}{q_1}}_{L_p(\mathbb{R})} \|h\|^{\frac{r}{q_1}}_{L_r(\mathbb{R})}.
	\end{equation}
	Combining  \eqref{eq3.4}\eqref{eq3.5} and  \eqref{eq3.6}, under condition \eqref{eq3.2}, we obtain\begin{equation}\label{eq3.7}
	\|A_1\|_{L_{p_1}(\R^2)} \|A_2\|_{L_{r_1}(\R^2)} \|A_3\|_{L_{q_1}(\R^2)} \leq 4\|g\|_{L_p(\R)} \|f\|_{L_q(\R)} \|h\|_{L_r(\R)}.
	\end{equation}
	Furthermore, from \eqref{eq2.4} and \eqref{eq3.3} we have
	$$\begin{aligned}
		\bigg|\int_{\R} \big(f \underset{H_1,F}{\ast} g\big)(x) \cdot h(x) dx\bigg|&\leq \frac{1}{2\sqrt{2\pi}}\int_{\R^2} |g(y)| |f(x+y) + f(x-y) +if(-x-y) - if(-x+y)| |h(x)| dydx\\
		&= \frac{1}{2\sqrt{2\pi}} \int_{\R^2}\ \prod_{i=1}^3 A_i(x,y) dxdy
	\end{aligned}$$
Thus $\frac{1}{p_1}+\frac{1}{q_1}+\frac{1}{r_1}=1$. Using H\"older inequality and \eqref{eq3.7}, then $\forall g\in L_p(\mathbb{R}),~ f\in L_q(\mathbb{R}),~h\in L_r(\mathbb{R})$ we have
$$\begin{aligned}
&\frac{1}{2\sqrt{2\pi}} \int_{\R^2}\ \prod_{i=1}^3 A_i(x,y) dxdy\\&\leq \frac{1}{2\sqrt{2\pi}}\left(\int_{\R^2}|A_1 (x,y) |^{p_1} dxdy\right)^{1\textfractionsolidus p_1}\left(\int_{\R^2}|A_2 (x,y) |^{r_1} dxdy\right)^{1\textfractionsolidus r_1}\left(\int_{\R^2}|A_3 (x,y) |^{q_1} dxdy\right)^{1\textfractionsolidus q_1}\\
&=\frac{1}{2\sqrt{2\pi}}\|A_1\|_{L_{p_1}(\R^2)} \|A_2\|_{L_{r_1}(\R^2)} \|A_3\|_{L_{q_1}(\R^2)}
\leq \sqrt{\frac{2}{\pi}} \|g\|_{L_p(\mathbb{R})} \|f\|_{L_q(\mathbb{R})} \|h\|_{L_r(\R)}
\end{aligned}$$
\end{proof}
\begin{remark}
	\noindent Readers can find another result similar to this theorem through Theorem 2.4 in \cite{tuan2022Mediterranean}.
\end{remark}
\noindent In case the given function $h(x)$ becomes Hartley-Fourier generalized convolution operator \eqref{eq2.4}, the following Young-type inequality is a direct consequence of Theorem \ref{theorem1}.
\begin{corollary}[\textbf{Young's inequality for Hartley-Fourier generalized convolution}]\label{hequa1}
	Let $p,q,r > 1$ be real numbers, satisfying  $\frac{1}{p}+\frac{1}{q}=1+\frac{1}{r}$. If $g\in L_p(\mathbb{R})$, $f\in L_q(\mathbb{R})$, then the convolution \eqref{eq2.4} is well-defined and belongs to $L_r(\mathbb{R})$. Moreover, the following inequality holds
	\begin{equation}\label{eq3.8}
	\big\| f\underset{H_1, F}{*} g\big\|_{L_r(\mathbb{R})} \leq \sqrt{\frac{2}{\pi}} \|g\|_{L_p(\mathbb{R})} \|f\|_{L_q(\mathbb{R})}.
	\end{equation}
\end{corollary}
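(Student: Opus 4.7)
The plan is to derive this corollary from Theorem \ref{theorem1} by a routine duality argument. First I would observe that the exponent condition here is dual to the one in Theorem \ref{theorem1}: letting $r'$ denote the H\"older conjugate of $r$ (so $\frac{1}{r}+\frac{1}{r'}=1$), the relation $\frac{1}{p}+\frac{1}{q}=1+\frac{1}{r}$ rearranges to $\frac{1}{p}+\frac{1}{q}+\frac{1}{r'}=2$. Since $r>1$ is finite, $r'\in(1,\infty)$, so Theorem \ref{theorem1} is applicable with its ``$r$'' replaced by $r'$.

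Next, for every $h\in L_{r'}(\R)$, Theorem \ref{theorem1} gives
$$\bigg|\int_{\R}\big(f\underset{H_1,F}{*}g\big)(x)\,h(x)\,dx\bigg|\leq \sqrt{\tfrac{2}{\pi}}\,\|g\|_{L_p(\R)}\|f\|_{L_q(\R)}\|h\|_{L_{r'}(\R)}.$$
This says that the linear functional $h\mapsto \int_{\R}\big(f\underset{H_1,F}{*}g\big)(x)\,h(x)\,dx$ is bounded on $L_{r'}(\R)$ with norm at most $\sqrt{2/\pi}\,\|g\|_{L_p(\R)}\|f\|_{L_q(\R)}$. By the duality $L_{r'}(\R)^{*}\cong L_r(\R)$---equivalently, the converse of H\"older's inequality, which is valid because $1<r'<\infty$---the function $f\underset{H_1,F}{*}g$ must agree a.e.\ with an $L_r(\R)$ representative whose norm is bounded by the same constant, which is exactly inequality \eqref{eq3.8}.

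The only delicate point, and thus the main (though minor) obstacle, is verifying that the convolution is an a.e.-defined measurable function so that the dualization step is legitimate. This is already implicit in the proof of Theorem \ref{theorem1}, where the H\"older step in fact controls the full double integral of the pointwise modulus
$$|g(y)|\,|f(x+y)+f(x-y)+if(-x-y)-if(-x+y)|\,|h(x)|.$$
Applying this with $h=\mathbf{1}_K$ for an arbitrary compact $K\subset\R$ and invoking Fubini's theorem yields local integrability in $x$ together with a.e.\ finiteness of the inner integral defining $\big(f\underset{H_1,F}{*}g\big)(x)$; measurability follows from the standard Fubini--Tonelli argument. Once this is in hand, the duality step above concludes the proof, and no further computation is needed beyond what Theorem \ref{theorem1} has already provided.
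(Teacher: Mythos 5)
Your proposal is correct and takes essentially the same route as the paper: both pass to the conjugate exponent $r_1$ (your $r'$), observe that $\frac{1}{p}+\frac{1}{q}+\frac{1}{r_1}=2$, and dualize Theorem \ref{theorem1}, the paper invoking Riesz's representation theorem for membership in $L_r(\R)$ and then extracting \eqref{eq3.8} by testing against the explicit extremal function $h=\mathrm{sign}\{\cdot\}^r\{\cdot\}^{r/r_1}$, which is just a hands-on instance of your converse-H\"older step. Your extra verification that the convolution is a.e.\ defined and measurable (testing with $h=\mathbf{1}_K$ and Fubini) addresses a point the paper leaves implicit but does not alter the argument.
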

\begin{proof}
	Let $r_1$ be the conjugate exponent of $r$, i.e $\frac{1}{r}+\frac{1}{r_1}=1$. From the assumptions of Corollary \ref{hequa1}, we have $\frac{1}{p}+\frac{1}{q}+\frac{1}{r_1}=2$,  which shows the numbers $p$, $q$, and $r_1$  satisfy the conditions of  Theorem \ref{theorem1} (with  $r$  being replaced by $r_1$). Therefore, if $g\in L_p(\mathbb{R})$, $f\in L_q(\mathbb{R})$, then the linear operator	$$Lh:=\int_{\R} \big(f \underset{H_1, F}{*} g\big) (x)\cdot h(x) dx$$ is bounded in $L_{r_1}(\R)$.
	Consequently, by the Riesz's representation theorem \cite{Stein1971Weiss}, then generalized convolution $\big(f \underset{H_1, F}{*} g\big)$ belongs to $L_r(\mathbb{R})$.
	To prove the inequality \eqref{eq3.8},  we choose the function $$h(x):=\mathrm{sign}\bigg\{ \big(f \underset{H_1, F}{*} g\big)(x)\bigg\}^r \times \bigg\{ \big(f \underset{H_1, F}{*} g\big)(x)\bigg\}^{\frac{r}{r_1}}.$$ 
	Then $h \in L_{r_1} (\R)$, with the norm $\|h\|_{L_{r_1}(\mathbb{R})} = \big\|f \underset{H_1, F}{*}  g\big\|^{\frac{r}{r_1}}_{L_r(\mathbb{R})}.$ Applying inequality \eqref{eq3.1} to such  function $h(x)$, we get
	$$\begin{aligned}\label{eq2.9}
		\big\|f \underset{H_1, F}{*}  g\big\|_{L_r(\mathbb{R})}^r &= \int_{\R} \big|\big(f \underset{H_1, F}{*}  g\big)(x)\big|^r dx\nonumber
		= \bigg|\int_{\R} \big(f \underset{H_1, F}{*}  g\big)(x)\cdot h(x)\bigg|\nonumber\\
		&\leq \sqrt{\frac{2}{\pi}} \|g\|_{L_p(\mathbb{R})} \|f\|_{L_q(\mathbb{R})} \|h\|_{L_{r_1}(\mathbb{R})}\nonumber
		=\sqrt{\frac{2}{\pi}} \|g\|_{L_p(\mathbb{R})} \|f\|_{L_q(\mathbb{R})} \big\|f \underset{H_1, F}{*}  g\big\|^{\frac{r}{r_1}}_{L_r(\mathbb{R})}.
	\end{aligned}$$
	Since $r - \frac{r}{r_1} =1$, from the above equality, we arrive at the conclusion of the corollary.
\end{proof}

Notice that, inequalities \eqref{eq3.1} and \eqref{eq3.8} do not hold for the important case $f, g \in L_2 (\R)$.  We know that the problem is determining the duals of the space $L_p$ with $p \in$ $[1, \infty)$. There are basically two cases of this problem that are $p=1$, and the case $1<p<\infty$. The major difference between these two cases is the fact that for $1<p<\infty$, there is a \textquotedblleft nice\textquotedblright characterization of the dual of $L_p$ which identifies it with $L_q$ (with $1/p + 1/q=1$), and this identification holds without any restriction on the underlying space. Otherwise, the duality of $L_1$ will be defined by $L_{\infty}$ locally, but this identification will work only if the underlying measure space is decomposable. Now let us consider the case $p=q=r=1$, this means $f$ and $g$ are functions belonging to the $L_1 (\R)$. 
We deduce that $ \big(f\underset{H_1, F}{*}  g\big)$ belongs to $L_1 (\R)$ (see \cite{ThaoHVA2014MMA,Tuan2022MMA}) and get the estimate $L_1$-norm as follows.
\begin{theorem}
	For any $f, g\in L_1 (\R)$, then 
\begin{equation}\label{3.9}
\big\|f \underset{H_1, F}{*}  g\big\|_{L_1 (\R)} \leq \sqrt{\frac{2}{\pi}} \|f\|_{L_1(\mathbb{R})} \|g\|_{L_1(\mathbb{R})}.
\end{equation}\end{theorem}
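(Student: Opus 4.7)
The plan is to estimate the $L_1$-norm of the convolution directly from its definition \eqref{eq2.4} by moving the absolute value inside the integrals, applying Tonelli/Fubini to swap the order of integration, and then performing four translation/reflection changes of variables, each of which preserves the Lebesgue measure on $\mathbb{R}$.

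More precisely, I would start from
$$\big\|f \underset{H_1,F}{*} g\big\|_{L_1(\mathbb{R})} = \int_{\mathbb{R}} \bigl| \big(f \underset{H_1,F}{*} g\big)(x) \bigr|\, dx,$$
pull the factor $\frac{1}{2\sqrt{2\pi}}$ out, apply the triangle inequality inside the integral in $y$ (noting $|i|=1$), and obtain
$$\big\|f \underset{H_1,F}{*} g\big\|_{L_1(\mathbb{R})} \leq \frac{1}{2\sqrt{2\pi}} \int_{\mathbb{R}} \int_{\mathbb{R}} |g(y)|\bigl[|f(x+y)| + |f(x-y)| + |f(-x-y)| + |f(-x+y)|\bigr] dy\,dx.$$
Since the integrand is non-negative and each of the four terms is separately integrable on $\mathbb{R}^2$ (as I verify in the next step), Tonelli's theorem permits swapping the order of integration.

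Next, for each fixed $y \in \mathbb{R}$, I would apply the affine substitutions $u = x+y$, $u = x-y$, $u = -x-y$, and $u = -x+y$ in the inner integral in $x$. Each of these has Jacobian $\pm 1$, so
$$\int_{\mathbb{R}} |f(\pm x \pm y)|\, dx = \int_{\mathbb{R}} |f(u)|\, du = \|f\|_{L_1(\mathbb{R})}.$$
Summing the four contributions yields $4\|f\|_{L_1(\mathbb{R})}$ inside the remaining integral over $y$, so
$$\big\|f \underset{H_1,F}{*} g\big\|_{L_1(\mathbb{R})} \leq \frac{4\|f\|_{L_1(\mathbb{R})}}{2\sqrt{2\pi}} \int_{\mathbb{R}} |g(y)|\, dy = \frac{2}{\sqrt{2\pi}}\, \|f\|_{L_1(\mathbb{R})}\|g\|_{L_1(\mathbb{R})} = \sqrt{\frac{2}{\pi}}\,\|f\|_{L_1(\mathbb{R})}\|g\|_{L_1(\mathbb{R})}.$$

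There is no real obstacle here — the argument is entirely routine once the symmetries of the kernel are exploited. The only point to be careful about is justifying Fubini: I would phrase the computation first for the non-negative majorant so that Tonelli applies without any integrability hypothesis beyond $f,g \in L_1(\mathbb{R})$, and the finiteness of the resulting double integral then retroactively licenses Fubini for the signed integrand in \eqref{eq2.4}. This same bound is already implicit in the proof cited from \cite{ThaoHVA2014MMA,Tuan2022MMA} that $f \underset{H_1,F}{*} g \in L_1(\mathbb{R})$; the present statement just records the sharp constant $\sqrt{2/\pi}$.
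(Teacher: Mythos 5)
Your proposal is correct and follows essentially the same route as the paper: both estimates proceed directly from definition \eqref{eq2.4} via the triangle inequality, a change of variables in each of the four terms, and Fubini's theorem, arriving at the constant $\frac{4}{2\sqrt{2\pi}}=\sqrt{\frac{2}{\pi}}$. Your explicit Tonelli justification for the non-negative majorant is in fact slightly more careful than the paper's one-line appeal to Fubini, but the argument is the same.
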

\begin{proof}
	Indeed, by the Definition \ref{dnTichchapHartley}, we infer that
	$$\begin{aligned}
	\big\|f \underset{H_1, F}{*}  g\big\|_{L_1(\mathbb{R})} &\leq  \frac{1}{2\sqrt{2\pi}} \int_{\R^2} |g(y)|\ |f(x+y) + f(x-y)
	+ if(-x-y) - if(-x+y)| dydx\\
	&\leq \frac{1}{2\sqrt{2\pi}}\bigg\{ \int_{\R^2} |g(y)| |f(x+y)| dydx
	+\int_{\R^2} |g(y)|\ |f(x-y)| dydx\\&
	+ \int_{\R^2} |g(y)|\ |if(-x-y)| dydx
	+ \int_{\R^2} |g(y)|\ |if(-x+y)| dydx\bigg\}.
	\end{aligned}$$
	Since $f,g\in L_1 (\R)$, through the change of variables combined with Fubini's theorem, we obtain
	$$\begin{aligned}
	\big\|f \underset{H_1, F}{*}  g\big\|_{L_1 (\R)} &\leq \frac{4}{2\sqrt{2\pi}}\bigg(\int_{\R} |g(y)| dy\bigg) \bigg(\int_{\R} |f(t)| dt\bigg)= \sqrt{\frac{2}{\pi}} \|f\|_{L_1(\mathbb{R})} \|g\|_{L_1(\mathbb{R})}.
	\end{aligned}$$
\end{proof}
What about the case $r=\infty$?
\begin{theorem}
	Suppose that $p,q>1$ and satisfy $\frac{1}{p}+\frac{1}{q}=1$. If $g\in L_p (\R), f\in L_q (\R)$, then convolution operator \eqref{eq2.4} is a bounded function for any $x\in \R$. Moreover, the following inequality holds
	\begin{equation}\label{case=vocung}
	\lv f\underset{H_1, F}{*}g\lv_{L_\infty (\R)} \leq \sqrt{\frac{2}{\pi}} \|g\|_{L_p (\R)}\|f\|_{L_q (\R)}.
	\end{equation}
\end{theorem}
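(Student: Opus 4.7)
The plan is to get the uniform bound directly from Hölder's inequality applied pointwise in $x$, estimating the four summands inside the kernel of \eqref{eq2.4} separately.

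First I would fix an arbitrary $x \in \mathbb{R}$ and use the triangle inequality inside the integral defining \eqref{eq2.4} to obtain
\[
\bigl|(f \underset{H_1,F}{*} g)(x)\bigr| \leq \frac{1}{2\sqrt{2\pi}} \int_{\mathbb{R}} |g(y)|\bigl(|f(x+y)| + |f(x-y)| + |f(-x-y)| + |f(-x+y)|\bigr) dy.
\]
Since $\frac{1}{p}+\frac{1}{q}=1$, I would then apply Hölder's inequality to each of the four resulting integrals. For instance,
\[
\int_{\mathbb{R}} |g(y)||f(x+y)|\,dy \leq \|g\|_{L_p(\mathbb{R})} \Bigl(\int_{\mathbb{R}} |f(x+y)|^q dy\Bigr)^{1/q} = \|g\|_{L_p(\mathbb{R})} \|f\|_{L_q(\mathbb{R})},
\]
where the last equality uses translation invariance of Lebesgue measure (a simple change of variables $t = x+y$). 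The same argument, with the substitutions $t = x-y$, $t = -x-y$, and $t=-x+y$ (each with Jacobian $\pm 1$), gives exactly the same bound $\|g\|_{L_p(\mathbb{R})} \|f\|_{L_q(\mathbb{R})}$ for the other three summands.

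Summing the four identical bounds and inserting the constant yields
\[
\bigl|(f \underset{H_1,F}{*} g)(x)\bigr| \leq \frac{1}{2\sqrt{2\pi}} \cdot 4 \|g\|_{L_p(\mathbb{R})}\|f\|_{L_q(\mathbb{R})} = \sqrt{\frac{2}{\pi}}\, \|g\|_{L_p(\mathbb{R})}\|f\|_{L_q(\mathbb{R})},
\]
using the arithmetic identity $\frac{4}{2\sqrt{2\pi}} = \sqrt{\frac{2}{\pi}}$. Since this bound is independent of $x$, the convolution is a bounded function on $\mathbb{R}$, and taking the supremum over $x$ gives the desired $L_\infty$ estimate \eqref{case=vocung}.

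There is essentially no obstacle here; the argument is a textbook pointwise Hölder estimate. The only thing worth verifying carefully is that the four change-of-variables manoeuvres really do transform each translated/reflected integral of $|f|^q$ back to $\|f\|_{L_q(\mathbb{R})}^q$, which is immediate since each map $y \mapsto \pm x \pm y$ is a measure-preserving bijection of $\mathbb{R}$. The case $p = q = 2$ of this estimate complements the $L_1$ bound \eqref{3.9}, covering the endpoint $r = \infty$ missing from Corollary \ref{hequa1}.
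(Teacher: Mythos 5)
Your proposal is correct and follows essentially the same route as the paper: a pointwise estimate via the triangle inequality, H\"older's inequality with the conjugate pair $(p,q)$, and the measure-preserving changes of variables $t=\pm x\pm y$, arriving at the same constant $\frac{4}{2\sqrt{2\pi}}=\sqrt{\frac{2}{\pi}}$. The only (cosmetic) difference is that you apply H\"older term-by-term to the four integrals and then sum, whereas the paper applies H\"older once to the full sum and absorbs the four terms through the convexity bound $(a+b+c+d)^q\leq 4^{q-1}(a^q+b^q+c^q+d^q)$; your variant is, if anything, slightly tidier.
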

\begin{proof}
From \eqref{eq2.4}, we have 
$$|(f\underset{H_1 , F}{*}g)|\leq\frac{1}{2\sqrt{2\pi}}\int\limits_{\R} |g(y)|\ (|f(x+y)|+|f(x-y)|+|if(-x-y)|+|if(-x+y)|)dy.$$
Using H\"older's inequalities for the pair of conjugate exponents $p$ and $q$, we deduce that
$$|(f\underset{H_1 , F}{*}g)|\leq\frac{1}{2\sqrt{2\pi}}\left(\int_\R |g(\xi)|^p d\xi\right)^{\frac{1}{p}}\left(\int_\R 4^q|f(t)|^q dt\right)^{\frac{1}{q}}=\frac{4}{2\sqrt{2\pi}}\\|g\|_{L_p (\R)}\|f\|_{L_q (\R)}< \infty.$$
This implies that the convolution operator $f\underset{H_1 , F}{*}g$ is bounded function $\forall x\in \R$ and the desired conclusion inequality \eqref{case=vocung}.
\end{proof}
Even in the case of the classical Fourier convolution, one has only the Young's convolution \eqref{3.9} for the case $p=q=r = 1$. Therefore, the Young-type inequality \eqref{3.9} is
a specific characteristic of the $ \underset{H_1 ,F}{*} $ convolution  introduced here. Indeed, suppose that $f, g$ are functions belonging to $L_1 (\R)$ space, then the Fourier convolution  $(f\underset{F}{*} g) \in L_1 (\R)$ and   $L_1$-norm estimation inequality are as follows $\lv (f\underset{F}{*} g)\lv_{L_1 (\R)} \leq \lv f\lv_{L_1 (\R)} \lv g\lv_{L_1 (\R)},$ (see \cite{Sneddon1972}). It shows, on the $L_1 (\R)$ space, when being equipped with the multiplication defined as Fourier convolution multiplication, we get $\left(L_1 (\R), \underset{F}{*} \right)$ as a commutative normed ring \cite{Sneddon1972} for the aforementioned convolution multiplication. However, the ring structure is no longer commutative if the convolution multiplication is replaced by the Hartley-Fouier generalized convolution \eqref{eq2.4}, this will be demonstrated in detail in Subsection 4.1 by applying inequality \eqref{3.9}.
\subsection{Young's inequalities for Hartley convolution}
In a similar way, we obtain Young type theorem for the  Hartley convolution \eqref{eq2.5}. The results of this part are proved to be similar to those in Subsection 3.1, respectively. 
\begin{theorem}\label{thm3.4}
	Let $p, q, r >1$ be real numbers satisfying $\frac{1}{p}+\frac{1}{q}+\frac{1}{r}=2$. Then the following inequalities hold true for all $f\in L_p(\mathbb{R})$, $g\in L_q(\mathbb{R})$ and $h\in L_r(\mathbb{R})$.
	\begin{equation}\label{3.1}
	\bigg|\int_{\R} \big(f \underset{H_{\left\{\substack{1\\2}\right\}}}{*} g\big)(x)\cdot h(x) dx\bigg|\leq \sqrt{\frac{2}{\pi}}\|f\|_{L_p(\mathbb{R})} \|g\|_{L_q(\mathbb{R})} \|h\|_{L_r(\mathbb{R})},
	\end{equation}	
\end{theorem}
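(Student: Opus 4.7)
The plan is to mirror the tripartite H\"older decomposition used in the proof of Theorem~\ref{theorem1}, with the roles of $f$ and $g$ interchanged to reflect the kernel structure of the Hartley convolution \eqref{eq2.5}: here $f(y)$ sits outside the bracket while the four-term combination built from $g$ plays the role previously played by $f$.

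First, I would introduce the conjugate exponents $p_1,q_1,r_1$ of $p,q,r$. The hypothesis $\frac{1}{p}+\frac{1}{q}+\frac{1}{r}=2$ gives exactly the system \eqref{eq3.2}, so in particular $\frac{1}{p_1}+\frac{1}{q_1}+\frac{1}{r_1}=1$ together with the dual identities $p(\frac{1}{q_1}+\frac{1}{r_1})=q(\frac{1}{p_1}+\frac{1}{r_1})=r(\frac{1}{p_1}+\frac{1}{q_1})=1$. Then I would set
\begin{align*}
B_1(x,y) &:= |g(x+y)+g(x-y)+g(-x+y)-g(-x-y)|^{q/p_1}\,|h(x)|^{r/p_1},\\
B_2(x,y) &:= |g(x+y)+g(x-y)+g(-x+y)-g(-x-y)|^{q/r_1}\,|f(y)|^{p/r_1},\\
B_3(x,y) &:= |f(y)|^{p/q_1}\,|h(x)|^{r/q_1},
\end{align*}
so that the exponent identities force the pointwise identity $B_1 B_2 B_3 = |g(x+y)+g(x-y)+g(-x+y)-g(-x-y)|\,|f(y)|\,|h(x)|$ on $\R^2$.

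Second, I would estimate each $B_i$ in $L_{p_i}(\R^2)$ exactly in the spirit of \eqref{eq3.4}--\eqref{eq3.6}. For $B_1$, convexity of $t\mapsto t^q$ together with four translation/reflection changes of variable in $y$ yields the $x$-uniform bound $\int_{\R}|g(x+y)+g(x-y)+g(-x+y)-g(-x-y)|^q\,dy\le 4^q\|g\|_{L_q(\R)}^q$; Fubini then gives $\|B_1\|_{L_{p_1}(\R^2)}=4^{q/p_1}\|g\|_{L_q(\R)}^{q/p_1}\|h\|_{L_r(\R)}^{r/p_1}$. The same argument, this time performing the inner integral in $x$ (where the substitution $x\mapsto -x$ on $\R$ neutralizes the sign flips in $g(-x\pm y)$), yields $\|B_2\|_{L_{r_1}(\R^2)}=4^{q/r_1}\|g\|_{L_q(\R)}^{q/r_1}\|f\|_{L_p(\R)}^{p/r_1}$, and Fubini alone gives $\|B_3\|_{L_{q_1}(\R^2)}=\|f\|_{L_p(\R)}^{p/q_1}\|h\|_{L_r(\R)}^{r/q_1}$. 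Multiplying the three norms, the exponent identities collapse every power of $\|f\|_{L_p}$, $\|g\|_{L_q}$, $\|h\|_{L_r}$ to $1$ and the constant reduces to $4^{q(1/p_1+1/r_1)}=4$, so $\|B_1\|_{L_{p_1}}\|B_2\|_{L_{r_1}}\|B_3\|_{L_{q_1}}\le 4\,\|f\|_{L_p(\R)}\|g\|_{L_q(\R)}\|h\|_{L_r(\R)}$.

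Finally, Definition~\ref{dnTichchapHartley} majorizes $\big|\int_{\R}\big(f\underset{H_{\left\{\substack{1\\2}\right\}}}{\ast}g\big)(x)\,h(x)\,dx\big|$ by $\frac{1}{2\sqrt{2\pi}}\iint_{\R^2} B_1 B_2 B_3\,dxdy$; applying the three-function H\"older inequality with exponents $p_1,r_1,q_1$ and noting $\frac{4}{2\sqrt{2\pi}}=\sqrt{\frac{2}{\pi}}$ produces \eqref{3.1}. I expect the only real obstacle to be bookkeeping: checking that the four reflections inside the absolute value — in particular the two terms $g(-x\pm y)$ — contribute equally in the $y$-integral used to bound $B_1$ and in the $x$-integral used to bound $B_2$, so that the clean constant $4^q$ survives in both cases. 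This is purely a change-of-variable verification and presents no genuine difficulty.
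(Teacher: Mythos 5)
Your proposal is correct and is essentially the paper's own argument: the paper gives no separate proof of Theorem~\ref{thm3.4}, stating only that it is proved as in Subsection~3.1, and your adaptation — swapping the roles of $f$ and $g$ in the decomposition $A_1,A_2,A_3$ of Theorem~\ref{theorem1}, re-verifying the four change-of-variable bounds for the kernel $g(x+y)+g(x-y)+g(-x+y)-g(-x-y)$ in both the $y$- and $x$-integrals, and applying three-exponent H\"older with \eqref{eq3.2} — is exactly that intended proof, with the constant $\frac{4}{2\sqrt{2\pi}}=\sqrt{\frac{2}{\pi}}$ coming out correctly. The only cosmetic remark is that your norm evaluations of $B_1$ and $B_2$ should be stated as inequalities ($\leq$) rather than equalities, since they rest on the convexity bound $|a+b+c+d|^q\leq 4^{q-1}(|a|^q+|b|^q+|c|^q+|d|^q)$; this does not affect the conclusion.
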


\begin{corollary} \label{cor3.2}
	If $p,q,r>1$ and satisfy $\frac{1}{p}+\frac{1}{q}=1+\frac{1}{r}$, for any $f\in L_p(\mathbb{R})$, $g\in L_q(\mathbb{R})$, then convolution $\big(f \underset{H_{\left\{\substack{1\\2}\right\}}}{*} g\big)$ belongs to $L_r(\mathbb{R})$. Then we obtain the following norm estimation
	\begin{equation}\label{3.11}
	\big\|f \underset{H_{\left\{\substack{1\\2}\right\}}}{*} g\big\|_{L_r(\mathbb{R})} \leq \sqrt{\frac{2}{\pi}}\|f\|_{L_p(\mathbb{R})} \|g\|_{L_q(\mathbb{R})}.
	\end{equation}
\end{corollary}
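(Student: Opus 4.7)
The plan is to mirror the duality argument used to prove Corollary \ref{hequa1}, with Theorem \ref{thm3.4} playing the role that Theorem \ref{theorem1} played there. In other words, I would deduce the $L_r$-boundedness and the norm inequality \eqref{3.11} by using the trilinear estimate \eqref{3.1} together with the Riesz representation theorem, and then choose a sharp test function to extract the norm.

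First I would set $r_1$ to be the conjugate exponent of $r$, so that $1/r + 1/r_1 = 1$. The hypothesis $\frac{1}{p}+\frac{1}{q}=1+\frac{1}{r}$ can then be rewritten as $\frac{1}{p}+\frac{1}{q}+\frac{1}{r_1}=2$, which is exactly the condition needed in Theorem \ref{thm3.4} with $r$ replaced by $r_1$. Applying \eqref{3.1} to the triple $(f,g,h)$ with $f\in L_p(\R)$, $g\in L_q(\R)$, and an arbitrary $h\in L_{r_1}(\R)$, I obtain that the linear functional
$$L\colon h\longmapsto \int_{\R}\big(f\underset{H_{\left\{\substack{1\\2}\right\}}}{*}g\big)(x)\cdot h(x)\,dx$$
is bounded on $L_{r_1}(\R)$ with operator norm at most $\sqrt{2/\pi}\,\|f\|_{L_p(\R)}\|g\|_{L_q(\R)}$. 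By the Riesz representation theorem, this forces $\big(f\underset{H_{\left\{\substack{1\\2}\right\}}}{*}g\big)\in L_r(\R)$.

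Next, to obtain the sharp norm bound \eqref{3.11}, I would select the specific test function
$$h(x):=\mathrm{sign}\bigg\{\big(f\underset{H_{\left\{\substack{1\\2}\right\}}}{*}g\big)(x)\bigg\}^{r}\cdot \bigg\{\big(f\underset{H_{\left\{\substack{1\\2}\right\}}}{*}g\big)(x)\bigg\}^{r/r_1},$$
exactly as in the proof of Corollary \ref{hequa1}. This $h$ lies in $L_{r_1}(\R)$ with $\|h\|_{L_{r_1}(\R)}=\|f\underset{H_{\left\{\substack{1\\2}\right\}}}{*}g\|^{r/r_1}_{L_r(\R)}$, and substituting into the inequality \eqref{3.1} yields
$$\big\|f\underset{H_{\left\{\substack{1\\2}\right\}}}{*}g\big\|_{L_r(\R)}^{r}\;\le\;\sqrt{\tfrac{2}{\pi}}\,\|f\|_{L_p(\R)}\|g\|_{L_q(\R)}\,\big\|f\underset{H_{\left\{\substack{1\\2}\right\}}}{*}g\big\|^{r/r_1}_{L_r(\R)}.$$
Dividing through by $\|f\underset{H_{\left\{\substack{1\\2}\right\}}}{*}g\|^{r/r_1}_{L_r(\R)}$ and using the identity $r-r/r_1=1$ gives \eqref{3.11}.

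There is no real obstacle here, since the argument is completely parallel to that of Corollary \ref{hequa1}; the only point requiring minor care is the tacit assumption that $\|f\underset{H_{\left\{\substack{1\\2}\right\}}}{*}g\|_{L_r(\R)}$ is nonzero when dividing (the inequality is trivial otherwise), and the verification that the chosen $h$ indeed has $L_{r_1}$-norm equal to $\|f\underset{H_{\left\{\substack{1\\2}\right\}}}{*}g\|^{r/r_1}_{L_r(\R)}$, which follows from $r\cdot\frac{1}{r_1}\cdot r_1 = r$. The only structural difference between this proof and that of Corollary \ref{hequa1} is the use of Theorem \ref{thm3.4} in place of Theorem \ref{theorem1}; both reduce the corollary to its sharp trilinear companion.
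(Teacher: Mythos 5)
Your argument is correct and is exactly the paper's own proof: the paper disposes of Corollary \ref{cor3.2} with the single remark that ``the proof is similar to Corollary \ref{hequa1},'' which is precisely the duality scheme you execute --- apply Theorem \ref{thm3.4} with $r$ replaced by the conjugate exponent $r_1$, invoke the Riesz representation theorem for membership in $L_r(\mathbb{R})$, and extract the norm bound via the test function $h$ with $\|h\|_{L_{r_1}(\mathbb{R})}=\big\|f \underset{H_{\left\{\substack{1\\2}\right\}}}{*} g\big\|^{r/r_1}_{L_r(\mathbb{R})}$ and the identity $r-\frac{r}{r_1}=1$. Your side remark that one may divide only when the $L_r$-norm is nonzero (the inequality being trivial otherwise) is a small point the paper leaves tacit, but it does not change the argument.
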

\noindent The proof is similar to Corollary \ref{hequa1}. And $L_1$-norm estimate of Hartley convolution \eqref{eq2.5} is as follows.
\begin{equation}\label{3.12}
\big\|f  \underset{H_{\left\{\substack{1\\2}\right\}}}{*} g \big\|_{L_1(\mathbb{R})} \leq \sqrt{\frac{2}{\pi}} \|f\|_{L_1(\mathbb{R})} \|g\|_{L_1(\mathbb{R})}\quad \forall f, g \in L_1 (\R).
\end{equation}
The same thing for the case $r=\infty$ is also obtained  with convolution \eqref{eq2.5}.
\subsection{Weighted $L_p$-boundedness for the Hartley convolutions}
By considering the $L_p$ norms in more naturally determined weighted spaces, S. Saitoh \cite{Saitoh2000}
gave Fourier convolution norm inequalities in the form

$$\big\lv\big( (F_1 \rho_1) \underset{F}{*}(F_2 \rho_2)\big) \cdot (\rho_1 \underset{F}{*} \rho_2)^{\frac{1}{p}-1}\big\lv_{L_p (\R)} \leq \big\lv F_1 \big\lv_{L_p (\R,|\rho_1|)} \big\lv F_2\big\lv_{L_p (\R,|\rho_2|)},\quad p>1,$$ where $\rho_j $ are non-vanishing functions, $F_j \in L_p (\R, |\rho_j|),\, j=1,2$. Here, the norm of $F_j$ in the weighted space $L_{p}(\mathbb{R}, \rho_j)$ is understood as $
\big\lv F_j  \big\lv_{L_{p}(\mathbb{R}, \rho_j)}=\bigg\{\displaystyle\int\limits_{\R}|F_j(x)|^{p} \rho_j(x) \mathrm{d} x\bigg\}^{\frac{1}{p}}.$ This type of inequality is very convenient as many applications require the \textquotedblleft same\textquotedblright $L_p$ norms. Saitoh's basic idea in weighted $L_p$ norm inequalities is fairly simple, however, it has been successfully implemented in many application \cite{Saitoh2000}. Saitoh's inequality also can be applied to estimating the solution to a parabolic integro-differential equation \cite{hoangtuan2017thaovkt} modeling a scattered acoustic field.  Based on the above aspects, together with using H\"older's inequality, and Fubini's theorem, we established another result in weighted space $L_{p}(\mathbb{R}, \rho_j)$ for Hartley convolution \eqref{eq2.5}. Some techniques used in the proof of our theorem come from \cite{tuan2020Ukrainian,Tuan2023VKT}, and we follow closely the strategy of these results.
\begin{theorem}\label{SaitohForHartley}
	 Suppose that $\rho_1, \rho_2$  are non-vanishing  positive functions such that convolution $(\rho_1 \underset{H_{\left\{\substack{1\\2}\right\}}}{*}\rho_2)$, given by \eqref{eq2.5}, is well-defined. For any functions $F_1 \in L_p (\R, \rho_1)$ and $F_2 \in L_p (\R, \rho_2)$ with $p >1$,  the following $L_p (\R)$-weighted inequality holds true 
	 	\begin{equation}\label{eq26}
	 \big\lv \big( F_1 \rho_1\underset{H_{\left\{\substack{1\\2}\right\}}}{*}F_2 \rho_2\big)  (\rho_1 \underset{H_{\left\{\substack{1\\2}\right\}}}{*}\rho_2)^{\frac{1}{p}-1} \big\lv_{L_p (\R)} \leq \sqrt{\frac{2}{\pi}} \big\lv F_1 \big\lv_{L_p (\R, \rho_1)} \big\lv F_2\big\lv_{L_p (\R, \rho_2)}.
	 \end{equation} 
\end{theorem}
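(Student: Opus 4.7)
The plan is to mimic Saitoh's strategy for the Fourier case, adapted to the four-term Hartley kernel of \eqref{eq2.5}; this is the same template already used by the author in \cite{tuan2020Ukrainian,Tuan2023VKT}. The engine is a single Hölder application with conjugate exponents $p$ and $q=p/(p-1)$ inside the inner $y$-integral, followed by a Fubini interchange in $x$ and an elementary affine change of variables.

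First, applying the triangle inequality to \eqref{eq2.5} produces a sum of four positive integrals,
\[
\Bigl|\bigl(F_1\rho_1\underset{H_{\left\{\substack{1\\2}\right\}}}{\ast} F_2\rho_2\bigr)(x)\Bigr|\le\frac{1}{2\sqrt{2\pi}}\sum_{i=1}^{4}\int_{\R}|F_1(y)|\rho_1(y)\,|F_2(z_i)|\rho_2(z_i)\,dy,
\]
where $(z_1,z_2,z_3,z_4)=(x+y,x-y,-x+y,-x-y)$ runs over the four Hartley reflections. For each $i$, I would split the integrand pointwise as $\bigl[|F_1|^p\rho_1|F_2(z_i)|^p\rho_2(z_i)\bigr]^{1/p}\cdot\bigl[\rho_1(y)\rho_2(z_i)\bigr]^{1/q}$ and apply Hölder's inequality in $y$ to obtain $\int\le K_i(x)^{1/p}J_i(x)^{1/q}$, with
\[
K_i(x)=\int_{\R}|F_1|^p\rho_1|F_2(z_i)|^p\rho_2(z_i)\,dy,\qquad J_i(x)=\int_{\R}\rho_1(y)\rho_2(z_i)\,dy.
\]
A second, discrete Hölder inequality across $i=1,\ldots,4$ then yields $\sum_i K_i^{1/p}J_i^{1/q}\le\bigl(\sum_i K_i\bigr)^{1/p}\bigl(\sum_i J_i\bigr)^{1/q}$.

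The crucial structural step is to identify $\sum_{i=1}^4 J_i(x)$ with the Hartley convolution $(\rho_1\underset{H_{\left\{\substack{1\\2}\right\}}}{\ast}\rho_2)(x)$, up to the scalar $2\sqrt{2\pi}$, via the defining formula \eqref{eq2.5}; the positivity and non-vanishing hypotheses on $\rho_1,\rho_2$ are precisely what makes the weight $(\rho_1\underset{H_{\left\{\substack{1\\2}\right\}}}{\ast}\rho_2)^{1/p-1}$ meaningful. Substituting, raising to the $p$-th power and using $p/q=p-1$ produces the pointwise estimate
\[
\bigl|\bigl(F_1\rho_1\underset{H_{\left\{\substack{1\\2}\right\}}}{\ast}F_2\rho_2\bigr)(x)\bigr|^p\,\bigl(\rho_1\underset{H_{\left\{\substack{1\\2}\right\}}}{\ast}\rho_2\bigr)(x)^{1-p}\le\frac{1}{2\sqrt{2\pi}}\sum_{i=1}^4 K_i(x).
\]
Integrating over $x\in\R$, Fubini's theorem together with the substitution $t=z_i(x,y)$, an affine isometry of $\R$ for each fixed $y$, separates the two integrals and yields $\int_{\R}K_i(x)\,dx=\|F_1\|_{L_p(\R,\rho_1)}^p\,\|F_2\|_{L_p(\R,\rho_2)}^p$ for every $i$. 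Summing the four equal contributions and taking the $p$-th root produces the constant $\tfrac{4}{2\sqrt{2\pi}}=\sqrt{2/\pi}$ and delivers \eqref{eq26}.

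The main obstacle is the combined bookkeeping of constants through two successive Hölder estimates together with the sign of the fourth kernel term $-\rho_2(-x-y)$ in the Hartley convolution: cleanly absorbing that sign into the identification $\sum_i J_i(x)\leftrightarrow 2\sqrt{2\pi}\,(\rho_1\underset{H_{\left\{\substack{1\\2}\right\}}}{\ast}\rho_2)(x)$ is where the positivity of $\rho_1,\rho_2$ is genuinely used. The four affine substitutions in the Fubini step are each isometries of $\R$ and therefore routine.
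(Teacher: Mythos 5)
Your architecture coincides step for step with the paper's own proof: the four-way split of the kernel via the triangle inequality, H\"older in $y$ with exponents $p$ and $q=p/(p-1)$ on each of the four pieces, the recombination over $i=1,\dots,4$ (the paper phrases your discrete H\"older step as concavity of $t^{1/p}$ and $t^{1/q}$, which is the same estimate), the exponent cancellation $\frac{p}{q}+1-p=0$, and the Fubini-plus-affine-substitution finish producing the factor $\frac{4}{2\sqrt{2\pi}}$. However, the step you yourself single out as crucial is false as stated: with $(z_1,z_2,z_3,z_4)=(x+y,x-y,-x+y,-x-y)$ one has
$\sum_{i=1}^{4}J_i(x)=\int_{\R}\rho_1(y)\bigl(\rho_2(x+y)+\rho_2(x-y)+\rho_2(-x+y)+\rho_2(-x-y)\bigr)dy$,
with four plus signs, whereas $2\sqrt{2\pi}\,\bigl(\rho_1\underset{H_{\left\{\substack{1\\2}\right\}}}{\ast}\rho_2\bigr)(x)$ carries $-\rho_2(-x-y)$; these are not equal. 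Positivity of $\rho_1,\rho_2$ gives only $\sum_i J_i\ \ge\ 2\sqrt{2\pi}\,\bigl(\rho_1\underset{H_{\left\{\substack{1\\2}\right\}}}{\ast}\rho_2\bigr)$, and that is the useless direction: after your two H\"older applications the factor $\bigl(\sum_i J_i\bigr)^{p/q}=\bigl(\sum_i J_i\bigr)^{p-1}$ is multiplied by the weight $\bigl(\rho_1\underset{H_{\left\{\substack{1\\2}\right\}}}{\ast}\rho_2\bigr)^{1-p}$ with negative exponent, so your claimed pointwise bound requires precisely the reverse inequality $\sum_i J_i\le 2\sqrt{2\pi}\,\bigl(\rho_1\underset{H_{\left\{\substack{1\\2}\right\}}}{\ast}\rho_2\bigr)$, which fails whenever $\int_{\R}\rho_1(y)\rho_2(-x-y)\,dy>0$ --- that is, always, for nontrivial positive weights. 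Positivity therefore cannot ``absorb the sign'' as you assert; it is exactly what makes the needed inequality fail.

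In fairness, you have reproduced the published argument faithfully, including this weakest link: in \eqref{eq3.14} the paper silently majorizes the signed weight $\bigl|\int_{\R}\rho_1(y)\bigl(\rho_2(x+y)+\rho_2(x-y)+\rho_2(-x+y)-\rho_2(-x-y)\bigr)dy\bigr|^{1-p}$ by $\mathscr{T}^{1-p}_{(x,y)}$, where $\mathscr{T}$ is defined with four plus signs, a substitution that also runs in the wrong direction because the exponent $1-p$ is negative, and then cancels $\mathscr{T}^{p/q+1-p}=\mathscr{T}^{0}=1$. Your computation becomes a genuine proof if the weight in the statement is taken with the all-plus kernel (i.e.\ the quantity $\mathscr{T}$ itself, for which your identification of $\sum_i J_i$ is literally an identity), but not for the Hartley convolution weight as written. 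One further slip, also shared with the paper: the argument bounds $\|\cdot\|_{L_p(\R)}^{p}$ by $\sqrt{2/\pi}\,\|F_1\|^{p}_{L_p(\R,\rho_1)}\|F_2\|^{p}_{L_p(\R,\rho_2)}$, and taking $p$-th roots yields the constant $(2/\pi)^{1/(2p)}$, which is strictly larger than the stated $\sqrt{2/\pi}$ since $2/\pi<1$; so even granting the sign step, the constant in \eqref{eq26} does not follow from ``summing the four equal contributions and taking the $p$-th root.''
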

\begin{proof}
	From \eqref{eq2.5}, we have
	\begin{equation}\label{eq3.14}
	\begin{aligned}
	&\big\lv \big( F_1 \rho_1\underset{H_{\left\{\substack{1\\2}\right\}}}{*}F_2 \rho_2\big)  (\rho_1 \underset{H_{\left\{\substack{1\\2}\right\}}}{*}\rho_2)^{\frac{1}{p}-1} \big\lv^p_{L_p (\R)}=\int_\R\bigg|\big(F_1 \rho_1\underset{H_{\left\{\substack{1\\2}\right\}}}{*}F_2 \rho_2\big)(x)  (\rho_1 \underset{H_{\left\{\substack{1\\2}\right\}}}{*}\rho_2)^{\frac{1}{p}-1}(x) \bigg|^p dx
	\\&=
	\frac{1}{2\sqrt{2\pi}}\times\int_\R\biggl\{\bigg|\int_\R (F_1 \rho_1)(y)\bigg((F_2 \rho_2)(x+y) + (F_2 \rho_2)(x-y)+(F_2 \rho_2)(-x+y)-(F_2 \rho_2)(-x-y)\bigg)dy \bigg|^p\\ &\times\bigg|\int_\R \rho_1 (y)\bigg(\rho_2 (x+y)+\rho_2 (x-y)+\rho_2 (-x+y)-\rho_2(-x-y) \bigg)dy\bigg|^{1-p}\biggr\}dx	\\
	&\leq \frac{1}{2\sqrt{2\pi}}\int_\R \big( \mathcal{A}_1 (x,y)+\mathcal{A}_2 (x,y)+\mathcal{A}_3 (x,y)+\mathcal{A}_4 (x,y) \big)\mathscr{T}^{1-p}_{ (x,y)}dx,\ p>1,
	\end{aligned}
\end{equation}
where we put
$$\begin{aligned}
&A_1 (x,y)=\int_\R |(F_1 \rho_1)(y)|\ |(F_2 \rho_2)(x+y)|dy,\\
&A_2 (x,y)=\int_\R |(F_1 \rho_1)(y)|\ |(F_2 \rho_2)(x-y)|dy,\\
&A_3 (x,y)=\int_\R |(F_1 \rho_1)(y)|\ |(F_2 \rho_2)(-x+y)|dy,\\
&A_4 (x,y)=\int_\R |(F_1 \rho_1)(y)|\ |(F_2 \rho_2)(-x-y)|dy.
\end{aligned}$$
And $$\mathscr{T}^{1-p}_{ (x,y)}=\biggl\{\int_\R \rho_1 (y)\big(\rho_2 (x+y)+\rho_2 (x-y)+\rho_2 (-x+y)+\rho_2(-x-y) \big)dy\biggr\}^{1-p}.$$  Using H\"older's inequalities for the pair of conjugate exponents $p$ and $q$ towards operators $A_i, i=\overline{1,4}$, we have
$$\begin{aligned}
&A_1 (x,y)\leq \biggl\{\int_\R |F_1 (y)|^p \rho_1 (y) |F_2 (x+y)|^p \rho_2(x+y)dy\biggr\}^{\frac{1}{p}}\times\biggl\{\int_\R\rho_1 (y)\rho_2 (x+y)dy\biggr\}^{\frac{1}{q}},\\
&A_2 (x,y)\leq \biggl\{\int_\R |F_1 (y)|^p \rho_1 (y) |F_2 (x-y)|^p \rho_2(x-y)dy\biggr\}^{\frac{1}{p}}\times\biggl\{\int_\R\rho_1 (y)\rho_2 (x-y)dy\biggr\}^{\frac{1}{q}},\\
&A_3 (x,y)\leq \biggl\{\int_\R |F_1 (y)|^p \rho_1 (y) |F_2 (-x+y)|^p \rho_2(-x+y)dy\biggr\}^{\frac{1}{p}}\times\biggl\{\int_\R\rho_1 (y)\rho_2 (-x+y)dy\biggr\}^{\frac{1}{q}},\\
&A_4 (x,y)\leq \biggl\{\int_\R |F_1 (y)|^p \rho_1 (y) |F_2 (-x-y)|^p \rho_2(-x-y)dy\biggr\}^{\frac{1}{p}}\times\biggl\{\int_\R\rho_1 (y)\rho_2 (-x-y)dy\biggr\}^{\frac{1}{q}}.
\end{aligned}$$
	Therefore, recalling that  $t^{\frac{1}{p}}, t^{\frac{1}{q}}\ (p,q>1)$ are concave functions, together with the above-estimates, we obtain
	\begin{equation}\label{eq3.19}
	\begin{aligned}
	&\sum_{i=1}^{4}A_i (x,y)\leq \biggl\{\int_\R |(F_1 \rho_1)(y)|^p \rho_1 (y) \bigg(|F_2 (x+y)|^p \rho_2 (x+y)+|F_2 (x-y)|^p \rho_2 (x-y)\\&+|F_2 (-x+y)|^p \rho_2 (-x+y)+|F_2 (-x-y)|^p \rho_2(-x-y) \bigg)dy \biggr\}^{\frac{1}{p}}\times \mathscr{T}^{\frac{1}{q}}_{ (x,y)}.\end{aligned}
	\end{equation}
	From \eqref{eq3.14} and \eqref{eq3.19}, we deduce that
	$$\begin{aligned}
	&\big\lv \big( F_1 \rho_1\underset{H_{\left\{\substack{1\\2}\right\}}}{*}F_2 \rho_2\big)  (\rho_1 \underset{H_{\left\{\substack{1\\2}\right\}}}{*}\rho_2)^{\frac{1}{p}-1} \big\lv^p_{L_p (\R)}\\&\leq \frac{1}{2\sqrt{2\pi}}\int_\R \biggl\{\int_\R |F_1 (y)|^p \rho_1 (y)\bigg( |F_2 (x+y)|^p \rho_2 (x+y) + |F_2(x-y)|^p \rho_2 (x-y)+|F_2 (-x+y)|^p \rho_2 (-x+y)\\&+|F_2 (-x-y)|^p \rho_2 (-x-y)\bigg)dy \times  \mathscr{T}^{\frac{p}{q}+1-p}_{ (x,y)} \biggr\}dx.
	\end{aligned}$$
	Since $p$ and $q$ are a conjugate pair ($\frac{1}{p}+\frac{1}{q}=1$), it implies that $\frac{p}{q}+1-p=0$, by using the Fubini theorem, we obtain
	$$\begin{aligned}
	\big\lv \big( F_1 \rho_1\underset{H_{\left\{\substack{1\\2}\right\}}}{*}F_2 \rho_2\big)  (\rho_1 \underset{H_{\left\{\substack{1\\2}\right\}}}{*}\rho_2)^{\frac{1}{p}-1} \big\lv^p_{L_p (\R)}&\leq \frac{4}{2\sqrt{2\pi}} \int_{\R^2} |F_1(y)|^p \rho_1 (y)\ |F_2(t)|^p \rho_2 (t)\ dydt\\
	&=\sqrt{\frac{2}{\pi}}\bigg(\int_\R |F_1 (y)|^p \rho_1 (y) dy\bigg) \bigg(\int_\R |F_2 (t)|^p \rho_2 (t) dt\bigg).
	\end{aligned}$$
	From the above equality, we infer 
	the desired conclusion of the theorem.
\end{proof}
Clearly, if $\rho_2 (x)$ is an identical function $1$ for all $x \in \R$ and $0<\rho_1 (x)\in L_1 (\R)$, then we have $$\big|(\rho_1 \underset{H_{\left\{\substack{1\\2}\right\}}}{*}1)(x)\big|=\frac{2}{2\sqrt{2\pi}}\int_\R \rho_1(x)dx=\frac{1}{\sqrt{2\pi}}\|\rho_1\|_{L_1 (\R)} < \infty.$$ This means that $(\rho_1 \underset{H_{\left\{\substack{1\\2}\right\}}}{*}1)$ is well-defined and consequently 
$$\big|(\rho_1 \underset{H_{\left\{\substack{1\\2}\right\}}}{*}1)(x)\big|^{1-\frac{1}{p}} \leq \left(\frac{1}{\sqrt{2\pi}}\right)^{1-\frac{1}{p}} \lv\rho_1\lv^{1-\frac{1}{p}}_{L_1 (\R)}.$$ Combining with Theorem \ref{SaitohForHartley}, we arrive at the following corollary
\begin{corollary}\label{HequaSaitoh}
	Let $\rho_1 $ be a  positive function belonging to $L_1 (\R)$. If  $F_1,F_2$ are functions belonging to  $L_p (\R , \rho_1)$ and $L_p (\R)$, respectively, with $p>1$, then we have the following estimate
	\begin{equation}\label{eq3.20}
\lv F_1 \rho_1  \underset{H_{\left\{\substack{1\\2}\right\}}}{*} F_2 \lv_{L_p (\R)} \leq \frac{1}{\pi} (\sqrt{2\pi})^{\frac{1}{p}}\lv\rho_1\lv^{1-\frac{1}{p}}_{L_1 (\R)} \lv F_1 \rho_1\lv_{L_p (\R, \rho_1)}  \lv F_2 \lv_{L_p (\R)}.
	\end{equation}
\end{corollary}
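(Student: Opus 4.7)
The plan is to specialize Theorem \ref{SaitohForHartley} to the constant weight $\rho_2(x) \equiv 1$, for which the factor $(\rho_1 \underset{H_{\left\{\substack{1\\2}\right\}}}{\ast} \rho_2)^{\frac{1}{p}-1}$ that appears inside the $L_p$-norm on the left of \eqref{eq26} collapses to an explicit positive constant and can therefore be pulled outside the norm. As already observed in the paragraph immediately preceding the corollary, with $\rho_2 \equiv 1$ the bracket in definition \eqref{eq2.5} reduces to $[1+1+1-1]=2$, whence $(\rho_1 \underset{H_{\left\{\substack{1\\2}\right\}}}{\ast} 1)(x) = \tfrac{1}{\sqrt{2\pi}}\|\rho_1\|_{L_1(\R)}$ is finite, strictly positive, and independent of $x$; the assumptions $\rho_1 > 0$ and $\rho_1 \in L_1(\R)$ are used precisely at this step to guarantee that this constant makes sense and that its non-integer powers are meaningful.

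The key manipulation I would carry out is to insert the identity $1 = (\rho_1 \underset{H_{\left\{\substack{1\\2}\right\}}}{\ast} 1)(x)^{\frac{1}{p}-1} \cdot (\rho_1 \underset{H_{\left\{\substack{1\\2}\right\}}}{\ast} 1)(x)^{1-\frac{1}{p}}$ into the integrand defining $\lv F_1 \rho_1 \underset{H_{\left\{\substack{1\\2}\right\}}}{\ast} F_2 \lv_{L_p(\R)}$. Since $(\rho_1 \underset{H_{\left\{\substack{1\\2}\right\}}}{\ast} 1)^{1-\frac{1}{p}}$ is a positive constant, it factors out of the $L_p$-norm to give
$$\lv F_1 \rho_1 \underset{H_{\left\{\substack{1\\2}\right\}}}{\ast} F_2 \lv_{L_p(\R)} = \Big(\tfrac{1}{\sqrt{2\pi}}\Big)^{1-\frac{1}{p}} \|\rho_1\|^{1-\frac{1}{p}}_{L_1(\R)} \cdot \big\lv (F_1 \rho_1 \underset{H_{\left\{\substack{1\\2}\right\}}}{\ast} F_2)\,(\rho_1 \underset{H_{\left\{\substack{1\\2}\right\}}}{\ast} 1)^{\frac{1}{p}-1} \big\lv_{L_p(\R)}.$$
Applying Theorem \ref{SaitohForHartley} directly to the remaining $L_p$-norm on the right with $\rho_2 \equiv 1$, together with the trivial identity $\lv F_2 \lv_{L_p(\R,1)} = \lv F_2 \lv_{L_p(\R)}$, bounds that norm by $\sqrt{\tfrac{2}{\pi}}\,\lv F_1 \lv_{L_p(\R,\rho_1)}\lv F_2 \lv_{L_p(\R)}$.

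The final step is simply to consolidate the numerical constants: a short computation gives
$$\sqrt{\tfrac{2}{\pi}} \cdot \Big(\tfrac{1}{\sqrt{2\pi}}\Big)^{1-\frac{1}{p}} = \tfrac{2}{\sqrt{2\pi}} \cdot (\sqrt{2\pi})^{\frac{1}{p}-1} = \tfrac{1}{\pi}(\sqrt{2\pi})^{\frac{1}{p}},$$
which is exactly the constant appearing in \eqref{eq3.20}. There is no genuine obstacle in this proof: the corollary is an essentially algebraic specialisation of Theorem \ref{SaitohForHartley} once the constancy of $(\rho_1 \underset{H_{\left\{\substack{1\\2}\right\}}}{\ast} 1)(x)$ has been exploited, and the only care required is to book-keep correctly the mutually dual exponents $\tfrac{1}{p}$ and $1-\tfrac{1}{p}$ on the factors of $\sqrt{2\pi}$ and $\|\rho_1\|_{L_1(\R)}$.
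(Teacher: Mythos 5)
Your proof is correct and is essentially the paper's own argument: the paper likewise specializes Theorem \ref{SaitohForHartley} to $\rho_2\equiv 1$, computes from the bracket $1+1+1-1=2$ in \eqref{eq2.5} that $\big(\rho_1 \underset{H_{\left\{\substack{1\\2}\right\}}}{\ast} 1\big)(x)=\frac{1}{\sqrt{2\pi}}\lv\rho_1\lv_{L_1(\R)}$ is a positive constant, pulls its power $1-\frac{1}{p}$ out of the norm, and consolidates the constants to $\frac{1}{\pi}(\sqrt{2\pi})^{1/p}$ exactly as you do. The only point worth flagging is that your derivation (like the paper's) yields $\lv F_1\lv_{L_p(\R,\rho_1)}$ on the right-hand side, so the factor $\lv F_1\rho_1\lv_{L_p(\R,\rho_1)}$ printed in \eqref{eq3.20} appears to be a typographical slip in the statement rather than a defect of your argument.
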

\noindent Note that, the inequalities obtained in Theorem \ref{SaitohForHartley} and Corollary \ref{HequaSaitoh} are still true for case $p=2$. In a similar way as above, we also obtained $L_p$-boundedness in weighted space $L_p (\R, \rho_j)$ for generalized convolution operator \eqref{eq2.4}.  
\section{Some applications}

\subsection{Structure Nomerd ring}
To avoid confusion with the notation of exponential function $(e)$, we denote  $\mathcal{U}$ as the unit element for the multiplicative on vector space $V$.
\begin{definition}
	\textup{(See \cite{NaimarkMA1972}). A vector space $V$ is with a ring structure and a vector norm is called a \textit{normed ring} if $\lv v \ w\lv \leq \lv v\lv \lv w\lv,$ for all $v, w \in V$. If $V$ has a multiplicative unit element, then it also requires that  $\lv \mathcal{U} \lv=1$. }
\end{definition}
\begin{theorem}
	The Banach space $L_1 (\R)$, when being equipped with the Hartley-Fourier convolution \eqref{eq2.4} multiplication, $\big(L_1 (\R), \underset{H_1, F}{*} \big)$ becomes a non-commutative normed ring and has no unit element.
\end{theorem}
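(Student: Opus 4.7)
The plan is to verify, in sequence, closure of $\underset{H_1,F}{\ast}$ on $L_1(\R)$, the ring axioms, the submultiplicative norm inequality, the failure of commutativity, and the non-existence of a unit element.

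Closure has already been recorded: for any $f,g\in L_1(\R)$, $f\underset{H_1,F}{\ast}g\in L_1(\R)$, with the quantitative estimate \eqref{3.9}, $\|f\underset{H_1,F}{\ast}g\|_{L_1(\R)}\leq\sqrt{2/\pi}\,\|f\|_{L_1(\R)}\|g\|_{L_1(\R)}$. Since $\sqrt{2/\pi}<1$ this at once yields the submultiplicativity $\|f\underset{H_1,F}{\ast}g\|_{L_1}\leq\|f\|_{L_1}\|g\|_{L_1}$ defining a normed ring. Bilinearity, and hence distributivity over the Banach-space addition, is an immediate consequence of the linearity of the integral in \eqref{eq2.4}. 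For associativity, the natural path is to apply $H_1$ and invoke \eqref{eq2.6}: one has $H_1((f\underset{H_1,F}{\ast}g)\underset{H_1,F}{\ast}h)=(H_1 f)(Fg)(Fh)$, which should be matched with $H_1(f\underset{H_1,F}{\ast}(g\underset{H_1,F}{\ast}h))=(H_1 f)\cdot F(g\underset{H_1,F}{\ast}h)$, and then one would use the injectivity of $H_1$ on $L_1(\R)$, which follows from \eqref{eq2.2} coupled with the well-known $L_1$-injectivity of $F$.

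Non-commutativity is pinpointed by a concrete pair: take $f(x)=xe^{-x^2}$ (odd) and $g(x)=e^{-x^2}$ (even), both lying in $L_1(\R)$. From \eqref{eq2.2} a short check gives $H_1 f = iFf$ (odd case) and $H_1 g = Fg$ (even case). Applying \eqref{eq2.6} then yields $H_1(f\underset{H_1,F}{\ast}g)(y) = i(Ff)(y)(Fg)(y)$, while $H_1(g\underset{H_1,F}{\ast}f)(y) = (Fg)(y)(Ff)(y)$. Since $(Ff)(Fg)\not\equiv 0$ (both factors are Gaussian-type, nonzero on a set of positive measure) and $H_1$ is injective on $L_1(\R)$, the two convolutions are distinct elements of $L_1(\R)$.

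For the non-existence of a unit I argue by contradiction. Suppose $\mathcal{U}\in L_1(\R)$ satisfies $f\underset{H_1,F}{\ast}\mathcal{U}=f$ for every $f\in L_1(\R)$. Apply $H_1$ and use \eqref{eq2.6} to obtain $(H_1 f)(y)(F\mathcal{U})(y)=(H_1 f)(y)$ for almost every $y$. Taking $f$ to be a Gaussian, for which $H_1 f$ is strictly positive on $\R$, forces $(F\mathcal{U})(y)\equiv 1$, which contradicts the Riemann--Lebesgue lemma asserting $F\mathcal{U}\in C_0(\R)$ whenever $\mathcal{U}\in L_1(\R)$. The hard part of the program is associativity: the factorization \eqref{eq2.6} is asymmetric in its two arguments (Hartley on the first slot, Fourier on the second), so the transform-side identity does not close mechanically, and to complete the verification one would have to establish an auxiliary identity for $F(g\underset{H_1,F}{\ast}h)$ reducing it to $(Fg)(Fh)$, or alternatively carry out a direct and rather lengthy computation by expanding \eqref{eq2.4} term by term.
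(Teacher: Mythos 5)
The parts of your program that you actually carry out are correct, and at each of those points you match or improve on the paper's own proof. The paper likewise derives submultiplicativity from \eqref{3.9}, but does so through a cosmetic renormalization of the norm (``assume $\|f\|_{L_1}=\sqrt[4]{2/\pi}\int_{\R}|f|\,dx$''); your observation that $\sqrt{2/\pi}<1$ already gives $\|f\underset{H_1,F}{\ast}g\|_{L_1}\leq\|f\|_{L_1}\|g\|_{L_1}$ is simpler. For non-commutativity the paper offers only the one-line assertion that it follows ``by the factorization identity \eqref{eq2.6}''; your explicit odd/even Gaussian pair, combined with $L_1$-injectivity of $H_1$ (via \eqref{eq2.2} and injectivity of $F$), turns that assertion into an actual proof. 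For the absence of a unit the paper follows the same scheme as you (apply \eqref{eq2.6}, test against $f(x)=\sqrt{\pi/2}\,e^{-|x|}$, whose Hartley transform $(1+y^2)^{-1}$ is strictly positive, conclude $F\mathcal{U}\equiv 1$), but it finishes by formally computing $\mathcal{U}=F^{-1}(1)$ as a divergent integral that is ``$\notin L_1(\R)$''; your Riemann--Lebesgue contradiction is the rigorous way to end that argument.

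The gap you flag at associativity is genuine, and it is worse than you suspect: the auxiliary identity $F\big(g\underset{H_1,F}{\ast}h\big)=(Fg)(Fh)$ that your plan requires is false, and the multiplication is in fact not associative. From the relations \eqref{eq2.2}--\eqref{eq2.3} one has, for $u\in L_1(\R)$, $(Fu)(y)=\frac{1-i}{2}(H_1u)(y)+\frac{1+i}{2}(H_1u)(-y)$. Take $g$ even and $h$ odd, so that $H_1g=Fg$ with $Fg$ even and $Fh$ odd; then \eqref{eq2.6} gives $H_1\big(g\underset{H_1,F}{\ast}h\big)(\pm y)=\pm(Fg)(y)(Fh)(y)$, whence
\begin{equation*}
F\big(g\underset{H_1,F}{\ast}h\big)(y)=\tfrac{1-i}{2}(Fg)(y)(Fh)(y)-\tfrac{1+i}{2}(Fg)(y)(Fh)(y)=-i\,(Fg)(y)(Fh)(y),
\end{equation*}
so $H_1\big(f\underset{H_1,F}{\ast}(g\underset{H_1,F}{\ast}h)\big)=-i\,(H_1f)(Fg)(Fh)$, while $H_1\big((f\underset{H_1,F}{\ast}g)\underset{H_1,F}{\ast}h\big)=(H_1f)(Fg)(Fh)$. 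With $f=g=e^{-x^2}$ and $h=xe^{-x^2}$ these transforms differ for almost every $y\neq0$, so by injectivity of $H_1$ the two triple products are distinct elements of $L_1(\R)$: no argument, along your route or any other, can close this step. What saves your write-up relative to the paper is that the paper never addresses associativity at all --- its proof consists of exactly the three facts you did establish (submultiplicativity, non-commutativity, no unit). The defect therefore lies in the theorem's claim of a ring structure in the sense of the paper's own definition (Naimark's normed rings have associative multiplication), not in your execution; the honest conclusion is that $\big(L_1(\R),\underset{H_1,F}{\ast}\big)$ is a non-commutative, non-associative normed structure without unit, and your proposal proves precisely as much as the paper does.
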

\begin{proof}
	For any $f, g$ which belong to $ L_1 (\R)$ space, we get $(f\underset{H_1, F}{*} g) \in L_1 (\R)$ (refer \cite{ThaoHVA2014MMA}).  Without loss of generality, we may assume that $\|f\|_{L_1(\mathbb{R})} =\sqrt[4]{\frac{2}{\pi}}\int_{\R}|f(x)|dx$ , by inequality \eqref{3.9}, we get $
	\big\|f \underset{H_1,F}{\ast} g\big\|_{L_1(\mathbb{R})} \leq \|f\|_{L_1(\mathbb{R})} \|g\|_{L_1(\mathbb{R})}, \forall f, g\in L_1(\mathbb{R}).
	$	This implies that structure $(L_1 (\R), \underset{H_1, F}{*} )$ is a normed ring. On the other hand, by
	the factorization identity \eqref{eq2.6}, we conclude that the normed ring here is non-commutative
	for the aforementioned convolutional multiplication.
	
	Next, we need to prove that $(L_1 (\R), \underset{H_1, F}{*} )$ has no unit element. Suppose that, there exists  an element $\mathcal{U} \in L_1 (\R)$ such that $(\mathcal{U}\underset{H_1 , F}{*}f)(x)=(f\underset{H_1, F}{*}\mathcal{U})(x)=f(x),$ for any  $f \in L_1 (\R).$ The factorization identity \eqref{eq2.6} implies $
	H_1(f \underset{H_1,F}{\ast} \mathcal{U})(y) = (H_1f)(y), \forall f\in L_1(\mathbb R).
	$ This is equivalent to $(H_1f)(y)(F\mathcal{U})(y)=(H_1f)(y),$ so we deduce that \begin{equation}\label{H1}
	(H_1f)\big[(F\mathcal{U})(y)-1\big] =0, \forall f\in L_1(\mathbb{R}).
	\end{equation} Now, we choose $f(x)=\sqrt{\frac{\pi}{2}} e^{-|x|}\in L_1(\mathbb{R})$.  Obviously $f(x)$ belongs to the $L_1 (\R)$ and we obtain\\
	$
	H_1\left(\sqrt{\frac{\pi}{2}} e^{-|x|}\right) (y)=\frac{1}{1+y^2}>0.
	$  Then
	the equality \eqref{H1} holds if and only if $(F \mathcal{U})(y)=1$, this means that \begin{eqnarray*}
		\mathcal{U} (y)=(F1)(y)=\frac{1}{\sqrt{2\pi}}\int_{\R} e^{-ixy}dx
		=\frac{1}{\sqrt{2\pi}}\int_{\R} (\cos xy - i\sin xy)dx \notin L_1(\mathbb{R}).
	\end{eqnarray*}
	Therefore, the equality \eqref{H1} is not satisfied  $\forall f\in L_1(\R)$, implying that the
	convolution multiplications have no unit element.
\end{proof}
In a similar way, we also obtain a structure non-commutative normed ring, that has no unit element, when $L_1(\R)$ space is equipped with the Hartley convolution \eqref{eq2.5} multiplication.
We next turn our attention to the zero-divisor
properties of the normed ring, which is  Titchmarsh's theorem \cite{Titchmarsh1986,NaimarkMA1972}. The Titchmarsh convolution theorem asserts that if $f, g \in L_1(\mathbb{R})$ vanish on $(-\infty, 0)$, and $(f * g)(x)=0$ for $x \leq T$, then there exist real numbers $\alpha$ and $\beta$, such that $\alpha+\beta=T$, and the functions $f$ and $g$ vanish on $[0, \alpha]$ and $[0, \beta]$, respectively. Based on this, 
we give a new version of the Titchmarsh-type theorem for the Hartley-Fourier generalized convolution operator \eqref{eq2.4}. To get started, we need the following definition.
\begin{definition}
	Denote $ L_1 (\R, e ^{|x|})$ be the normed space of all measurable functions $f(x)$ on $\R$ such that  
	$ \int\limits_{\R} |f(x)|\ e^{|x|} dx < +\infty,$ with the norm defined by $||f||_{L_1 (\R, e^{|x|})} :=\int\limits_{\R} |f(x)|\ e^{|x|} dx.$	
\end{definition}
\begin{theorem}[\textbf{The Titchmarsh-type Theorem}]\label{TitchmarshTheorem}
	Let $f, g$ be continuous functions on $\R$ and such that $f,g \in L_{1}(\mathbb{R}, e^{|x|})$. It follows that $(f \underset{H_1, F}{*}g)(x) \equiv 0$ almost everywhere on $\mathbb{R}$ if and only if $f(x)$ or $g(x)$ is vanish almost everywhere on $\R$. 	
\end{theorem}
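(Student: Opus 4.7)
The reverse direction is immediate from the linearity of \eqref{eq2.4} in each argument, so I focus on the forward implication: assuming $(f \underset{H_1, F}{*} g)(x) = 0$ almost everywhere, I want to conclude that $f \equiv 0$ or $g \equiv 0$ a.e. My strategy is to lift the identity to the complex plane via the factorization property \eqref{eq2.6} and then apply the identity theorem for holomorphic functions, exactly as in the classical proof of Titchmarsh's convolution theorem.

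First I would show that both $(Fg)(y)$ and $(H_1 f)(y)$ admit holomorphic extensions to the horizontal strip $S := \{z \in \mathbb{C} : |\operatorname{Im}(z)| < 1\}$. Under the weighted hypothesis $f, g \in L_1(\R, e^{|x|})$, for any compact $K \subset S$ there is $\delta < 1$ with $|\operatorname{Im}(z)| \leq \delta$ throughout $K$; then $|g(x) e^{-ixz}|$ and $|f(x)\operatorname{Cas}(xz)|$ are uniformly dominated on $K$ by $|g(x)|e^{|x|}$ and $2|f(x)|e^{|x|}$, which lie in $L_1(\R)$. Dominated convergence applied to the difference quotients (equivalently, Morera's theorem applied to the integrands which are entire in $z$) yields holomorphic extensions of both transforms to $S$.

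Next, the factorization identity \eqref{eq2.6} combined with the hypothesis forces $(H_1 f)(y)(Fg)(y) = 0$ for every $y \in \R$. Since both factors are holomorphic on the connected open set $S$ and their product vanishes on the real axis (which has accumulation points in $S$), the identity theorem gives $(H_1 f)(z)(Fg)(z) \equiv 0$ throughout $S$. The ring of holomorphic functions on a connected open set is an integral domain, so either $(H_1 f) \equiv 0$ or $(Fg) \equiv 0$ on $S$, and in particular on $\R$.

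Finally, I would invoke the injectivity of the transforms on $L_1(\R)$: if $(Fg)(y) = 0$ for all $y \in \R$ and $g \in L_1(\R)$, then $g = 0$ a.e.\ by Fourier uniqueness; similarly, via \eqref{eq2.2}, the vanishing of $(H_1 f)$ on $\R$ is equivalent to the vanishing of the Fourier transform of $\tfrac{1+i}{2}f(x)+\tfrac{1-i}{2}f(-x)$, forcing this symmetrization and hence $f$ to vanish a.e. The main obstacle will be the holomorphic-extension step, where differentiation under the integral sign must be carefully justified using the weight $e^{|x|}$ as the dominating function; once that is in hand, the remainder is a clean appeal to the identity theorem and to transform uniqueness, with the continuity of $f, g$ playing only a cosmetic role in passing from \emph{a.e.} vanishing to pointwise vanishing.
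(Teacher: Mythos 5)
Your proof is correct, and its skeleton coincides with the paper's: apply $H_1$ and the factorization identity \eqref{eq2.6} to get $(H_1f)(y)(Fg)(y)\equiv 0$ on $\R$, use the weight $e^{|x|}$ to establish analyticity of both transforms, conclude that one factor vanishes identically, and finish by $L_1$-uniqueness of the Fourier and Hartley transforms. Where you differ is in how the analyticity is produced and exploited. The paper stays on the real line: it bounds $\bigl|\frac{d^k}{dy^k}(H_1f)(y)\bigr|\leq k!\,C$ with $C$ proportional to $\|f\|_{L_1(\R,e^{|x|})}$, so the Taylor remainder estimate yields a convergent expansion on $|y-y_0|<1$ and hence real-analyticity of $(H_1f)$ (and likewise $(Fg)$) on all of $\R$. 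You instead extend both transforms holomorphically to the strip $|\operatorname{Im}z|<1$ via domination by $|f(x)|e^{|x|}$, $|g(x)|e^{|x|}$ and Morera, then invoke the identity theorem and the integral-domain property of holomorphic functions on a connected open set; note these are two faces of the same estimate, since the paper's factorial derivative bounds are precisely the statement that the Taylor discs of radius $1$ tile your strip. Your route buys one genuine improvement: the paper passes from the identically vanishing product of two analytic functions to ``$(H_1f)$ or $(Fg)$ vanishes a.e.'' without comment, and for real-analytic functions this zero-divisor step requires the isolated-zeros argument that your appeal to $\mathcal{O}(S)$ being an integral domain supplies explicitly; conversely, the paper's computation is self-contained on $\R$ and avoids complex differentiation under the integral sign, which you correctly flag as the delicate step. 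One small point to tighten at the end: from $(H_1f)\equiv 0$ you conclude via \eqref{eq2.2} that the Fourier transform of $u(x)=\frac{1+i}{2}f(x)+\frac{1-i}{2}f(-x)$ vanishes, hence $u=0$ a.e., but ``hence $f=0$ a.e.'' is not immediate from $u\equiv 0$ alone; you should also use the reflected relation $u(-x)=\frac{1+i}{2}f(-x)+\frac{1-i}{2}f(x)=0$ a.e.\ and observe that the resulting $2\times 2$ linear system in $(f(x),f(-x))$ has determinant $\bigl(\frac{1+i}{2}\bigr)^2-\bigl(\frac{1-i}{2}\bigr)^2=i\neq 0$, or simply cite injectivity of $H_1$ on $L_1(\R)$ as the paper does.
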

\begin{proof}
	If $(f \underset{H_1,F}{\ast} g)\equiv 0$ almost everywhere on $\R$, then $H_1(f \underset{H_1,F}{\ast} g)(y)$ is vanish almost everywhere on $\R$. From \eqref{eq2.6}, we obtain
	\begin{equation}\label{4.2}
	H_1(f \underset{H_1,F}{\ast} g)(y)=(H_1f)(y)(Fg)(y)\equiv 0\ \text{almost everywhere on }\ \R.
	\end{equation}
	We prove that $(H_1 f)(y)$ and $(Fg)(y)$ are analytic functions. Indeed, using \eqref{eq2.1}  together with the Lebesgue’s dominated convergence theorem, we can exchange orders of the differentiation with the
	integration, we infer that
	$$\begin{aligned}
	\left|\frac{d^k}{dy^k}(H_1f)(y)\right| &\leq \frac{1}{2\sqrt{2\pi}}\int_{\R} \left|f(x)\left(\frac{d^k}{dy^k}\mathrm{Cas}\,xy\right)\right|dx
	= \frac{1}{2~\sqrt{2\pi}}\int_{\R} \left|f(x)x^k\right|\left|\cos\left(xy+\frac{k\pi}{2}\right) + \sin\left(xy+\frac{k\pi}{2}\right)\right|dx\\
	&\leq \frac{1}{2~\sqrt{\pi}} \int_{\R} \left|f(x)x^k\right|dx
	\leq \frac{1}{2~\sqrt{\pi}}\int_{\R} e^{-|x|} \frac{|x|^k}{k!} k!|f(x)| e^{|x|} dx\\
	&\leq \frac{1}{2~\sqrt{\pi}}\int_{\R} k!|f(x)| e^{|x|} dx\quad \left(\text{since } e^{-|x|}|x^k| = e^{-|x|} \frac{|x^k|}{k!} k!\leq k!\right)\\
	&=\frac{1}{2~\sqrt{\pi}}k! \|f\|_{L_1(\mathbb R, e^{|x|})} = k!C,\ \text{where }\ C=\frac{1}{2~\sqrt{\pi}}\|f\|_{L_1(\mathbb R, e^{|x|})}.
	\end{aligned}$$
	On the other hand, we evaluate the remainder of Taylor's series expansion in
	 neighborhood of a point $y_0$ as follows
	$$\left|\frac{1}{k!} \frac{d^k(H_1f)(\theta+y_0)}{dy^k} (y-y_0)^k\right| \leq \frac{1}{k!}k! C|y-y_0|^k
	=C |y-y_0|^k,\ \text{with}\ 0<\theta<1.$$
	This means that at $y_0$ and in the neighborhood of $y_0$  $(H_1 f)(y)$ can be expanded to Taylor series and converge on $|y-y_0|<1$, which implies that $(H_1 f)(y)$ is an analytic function $\forall y \in \R$.  In a similar way, we can also prove $(Fg)(y)$ is an analytic function. Therefore, from \eqref{4.2}, we infer that $(H_1 f)(y)$ or $(Fg)(y)$ is vanish almost everywhere on $\R$. Furthermore, the uniqueness in $L_1 (\R)$ of Hartley, Fourier transforms (refer \cite{Bracewell1986,WRudin1987,Sogge1993fourier}), imples that $f(x)$ or $g(x)$ are functions of vanishing almost everywhere on $\R$. 
\end{proof}
Theorem \ref{TitchmarshTheorem} can also be stated for convolution \eqref{eq2.5} with a similar proof.
\subsection{$L_1$-solutions of the Fredholm integral equation second kind}

Now, we consider the Fredholm integral equation of the second kind as follows (see \cite{Srivastava1992Buschman})
\begin{equation}\label{fredholmeq}
f(x)-\lambda \int_{\textbf{S}} K(x,y)f(y)d(y)=g(x), \end{equation}
Here the right-hand side $g( x )$ and the kernel $K(x,y)$ are some known functions, $\lambda$ is a known parameter, $S$ is a piecewise-smooth surface (or line), and $f$ is an unknown function.
Up to now, during the last two decades, the theory of abstract Volterra and Fredholm integral equation has undergone rapid development. To a large extent, this was due to the applications of this theory to problems in mathematical physics, such as viscoelasticity, heat conduction in materials with memory, electrodynamics with memory, and to the need for tools to tackle the problems arising in these fields. Many interesting phenomena are not found with differential equations but observed in specific examples of integral equations (refer \cite{Pruss2013evolutionary,Mezhlum2004Antonio}). For general kernels $K(x,y)$, an explicit solution to Eq. \eqref{fredholmeq} is not known, and approximate solutions have been sought instead. Nevertheless, some authors tried to get explicit analytic solutions to particular cases, for example, in \cite{Srivastava1992Buschman} have found analytic solutions to Eq. \eqref{fredholmeq} for the kernels form $K(x,\tau) = K(x-\tau)= (x-\tau)^\alpha; e^{-a|x-\tau|}; \sinh(a(x-\tau))$, and
$a J_1 (a(x-\tau)$ with $J_1$ being the Bessel functions.
We will consider solvability Eq. \eqref{fredholmeq} in  $L_1 (\R)$ for case $\lambda=-1$, $S= \R$ with kernel $
K(x,y)=\frac{1}{2\sqrt{2\pi}}[\varphi(x+y) +\varphi(x-y) +\varphi(-x+y) -\varphi(-x-y)],
$ and $
g(x)=(\varphi \underset{H_1,F}{\ast} \xi)(x).
$ We can be rewrite \eqref{fredholmeq} in the convolution form as follows
\begin{equation}\label{4.5}
f(x) + \big(f \underset{H_{\left\{\substack{1\\2}\right\}}}{\ast} \varphi\big) (x) =(\varphi\underset{H_1,F}{\ast}\xi)(x), \quad x\in \R.
\end{equation}
\begin{theorem}\label{L1redholm}
	Suppose that $\varphi$ and $\xi$ are given functions belonging to  $L_1(\mathbb R)$  and satisfy the condition $1+(H_1\varphi)(y)\ne 0$ for any $y\in \mathbb R$. Then  \eqref{4.5} has the unique solution in $L_1(\mathbb R)$ which can be
	represented in the form
	$
	f(x) = (\ell \underset{H_1,F}{\ast} \xi)(x).
	$ Here $\ell\in L_1(\mathbb R)$  is defined by
	$
	(H_1\ell)(y)=\frac{(H_1\varphi)(y)}{1+(H_1\varphi)(y)}.
	$
	Furthermore, the following $L_1$-norm estimate holds
	$
	\|f\|_{L_1(\mathbb R)} \leq \sqrt{\frac{2}{\pi}}\|\ell\|_{L_1(\mathbb R)} \|\xi\|_{L_1(\mathbb R)}.
	$		
\end{theorem}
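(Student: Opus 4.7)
The plan is to convert the convolution equation \eqref{4.5} into a purely algebraic one by applying the Hartley transform $H_1$ to both sides, exploiting the factorization identities \eqref{eq2.6} and \eqref{eq2.7}. Since $f \in L_1(\R)$ and $\varphi \in L_1(\R)$, the left-hand side is in $L_1(\R)$ (because $f \underset{H_{\{1,2\}}}{\ast} \varphi$ belongs to $L_1(\R)$ by \cite{giang2009MauTuan,Tuan2022MMA}); similarly the right-hand side is in $L_1(\R)$ by the Young-type inequality \eqref{3.9}. Applying $H_1$, the convolution products linearize:
\begin{equation*}
(H_1 f)(y)\bigl[1 + (H_{\{1,2\}}\varphi)(y)\bigr] = (H_1\varphi)(y)\,(F\xi)(y), \qquad y \in \R.
\end{equation*}
Here I use that the Hartley convolution $\underset{H_{\{1,2\}}}{\ast}$ acts on the \emph{second} slot with $H_{\{1,2\}}\varphi$, while \eqref{eq2.6} handles the generalized convolution on the right. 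Under the hypothesis $1+(H_1\varphi)(y)\neq 0$, I can divide through.

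Next, I invoke the Wiener--L\'evy Lemma \ref{WienerLevyforHartley}: since $\varphi \in L_1(\R)$ and $1+(H_1\varphi)(y)\neq 0$, there exists $\ell \in L_1(\R)$ with $(H_1\ell)(y) = \dfrac{(H_1\varphi)(y)}{1+(H_1\varphi)(y)}$. Substituting,
\begin{equation*}
(H_1 f)(y) = (H_1 \ell)(y)\,(F\xi)(y) = H_1\bigl(\ell \underset{H_1, F}{\ast} \xi\bigr)(y),
\end{equation*}
where the second equality uses the factorization \eqref{eq2.6} applied to $\ell$ and $\xi$. Since both $f$ and $\ell \underset{H_1, F}{\ast} \xi$ lie in $L_1(\R)$, injectivity of the Hartley transform on $L_1(\R)$ yields $f(x) = (\ell \underset{H_1, F}{\ast} \xi)(x)$ almost everywhere, which gives both existence and uniqueness in $L_1(\R)$. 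Finally, the norm estimate is immediate from the Young-type inequality \eqref{3.9}:
\begin{equation*}
\|f\|_{L_1(\R)} = \bigl\|\ell \underset{H_1, F}{\ast} \xi\bigr\|_{L_1(\R)} \leq \sqrt{\tfrac{2}{\pi}}\,\|\ell\|_{L_1(\R)}\|\xi\|_{L_1(\R)}.
\end{equation*}

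The main obstacle is the injectivity step for $L_1$ arguments, which the introduction flags as delicate: the Plancherel-style identity used in the $L_2$ theory of \cite{tuan2021korean,Tuan2022MMA} is unavailable here, and Theorem 68 of \cite{Titchmarsh1986} also fails. I plan to overcome this exactly as the introduction indicates --- pass to the relationship \eqref{eq2.2} to reduce Hartley injectivity on $L_1(\R)$ to injectivity of the Fourier transform on $L_1(\R)$, and if needed invoke the density of Schwartz class in $L_1(\R)$ to approximate and conclude that the equation $H_1 u = 0$ with $u \in L_1(\R)$ forces $u \equiv 0$ almost everywhere. A secondary point requiring care is the verification that $f(x) := (\ell \underset{H_1, F}{\ast} \xi)(x)$ actually solves the original equation \eqref{4.5}: this amounts to running the factorization argument in reverse, checking that $H_1$ applied to $f + f\underset{H_{\{1,2\}}}{\ast}\varphi$ agrees with $H_1$ of the right-hand side, and then appealing once more to $L_1$-injectivity.
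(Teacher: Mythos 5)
Your proposal is correct and follows essentially the same route as the paper's proof: apply $H_1$ to \eqref{4.5}, linearize via the factorization identities \eqref{eq2.7} and \eqref{eq2.6}, divide using the hypothesis $1+(H_1\varphi)(y)\neq 0$, produce $\ell$ via the Wiener--L\'evy Lemma \ref{WienerLevyforHartley}, and conclude with the Young-type bound \eqref{3.9}. Your explicit treatment of $L_1$-injectivity of $H_1$ (via \eqref{eq2.2} and Fourier uniqueness) and of the reverse verification is sound and in fact makes explicit two steps the paper leaves implicit.
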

\begin{proof}
	Applying the $(H_1)$ transform to both sides of  \eqref{4.5}, we deduce that
	$$
	(H_1f)(y) + H_1(f \underset{H_{\left\{\substack{1\\2}\right\}}}{\ast} \varphi)(y) = H_1\big(\varphi\underset{H_1,F}{\ast} \xi\big)(y).
	$$
	Using the factorization properties \eqref{eq2.7} and \eqref{eq2.6}, together with the condition $1+(H_1\varphi)(y)\ne 0$ for any $y\in \mathbb R$, we obtain 
	$
	(H_1f)(y)[1+(H_1\varphi)(y)] = (H_1\varphi)(y)(F\xi)(y),\quad y\in\mathbb R
	$
	This is equivalent to 
	$
	(H_1f)(y) = \frac{(H_1\varphi)(y)}{1+ (H_1\varphi)(y)}(F\xi)(y).
	$
	By Lemma \eqref{WienerLevyforHartley}, the existence of a function $\ell \in L_1(\mathbb R)$ such that
	$
	(H_1\ell)(y) = \frac{(H_1\varphi)(y)}{1+(H_1\varphi)(y)}.
	$
	This means that
	$$
	(H_1f)(y) = (H_1\ell)(y)(F\xi)(y) = H_1(\ell\underset{H_1,F}{\ast}\xi)(y).
	$$Therefore, $f(x)=\big(\ell\underset{H_1,F}{\ast}\xi\big)(x)$ almost everywhere for any  $x\in \mathbb R$. Moreover, since $\ell, \xi$ are functions belonging  $L_1(\mathbb R)$, we deduce that $f\in L_1(\mathbb R)$ (see \cite{ThaoHVA2014MMA,Tuan2022MMA}). Applying the inequality \eqref{3.9} we obtain
	norm estimation of the solution on $L_1$ space as follows
	$
	\|f\|_{L_1(\mathbb R)} \leq \sqrt{\frac{2}{\pi}}\|\ell\|_{L_1(\mathbb R)} \|\xi\|_{L_1(\mathbb R)}.
	$
\end{proof}

\begin{remark}\label{remark4.1}
	If $p, q, r>1$ satisfy the condition $\frac{1}{p}+\frac{1}{q}=1+\frac{1}{r}$, applying inequality \eqref{eq3.8}, for any  $f \in L_r(\mathbb R)$, $\ell \in L_p(\mathbb R)$, and $\xi \in L_q(\mathbb R)$, we
	get the following solution estimate
	$
	\|f\|_{L_r(\mathbb R)} \leq \sqrt{\frac{2}{\pi}} \|\ell\|_{L_p(\mathbb R)} \|\xi\|_{L_q(\mathbb R)}.
	$
	
	Furthermore, in case $r=\infty$, by applying \eqref{case=vocung}, we obtain $L_{\infty}$-boundedness of solution for Eq.\eqref{4.5} as follows $\lv f\lv_{L_{\infty} (\R)}\leq \sqrt{\frac{2}{\pi}} \lv\ell\lv_{L_p (\R)} \lv\xi\lv_{L_q (\R)}.$
\end{remark}

\subsection{$L_1$-solutions for Barbashin's equations}
We are concerned with integro-differential equation of the form 
\begin{equation}\label{eqBarbashin}
\frac{\partial f(x,y)}{\partial y} = c(x,y)f(x,y) + \int_a^b K(x,y,t) f(x,t) dt + \varphi(x,y).
\end{equation}
Here $c: J \times[a, b] \rightarrow \mathbb{R}, K: J \times[a, b] \times[a, b] \rightarrow \mathbb{R}$, and mapping $\varphi: J \times[a, b] \rightarrow \mathbb{R}$ are given functions, where $J$ is a bounded or unbounded interval, the function $f$ is unknown. The equation \eqref{eqBarbashin} was first studied  by E.A. Barbashin \cite{Barbashin1957} and his pupils. For this reason, this is nowadays called integro-differential  equation of Barbashin type or simply the Barbashin equation. The  \eqref{eqBarbashin} has been applied to many fields such as mathematical physics, radiation propagation, mathematical biology and transport problems, e.g., for more details refer \cite{Appell2000KalZabre}.  One
of the characteristics of Barbashin equation is that studying solvability of the equation is
heavily dependent on the kernel $K(x,s,\rho)$ of the equation. In many cases, we can reduce equation \eqref{eqBarbashin} to the form of an ordinary differential equation and use the Cauchy integral operator or evolution operator to study it when the kernel does not depend on $x$. In some other cases, we need to use the partial integral operator to study this equation (see \cite{Appell2000KalZabre}).  However, in the general case of $K(x,s,\rho)$ as an arbitrary kernel, the problem of finding a solution for Barbashin equation remains open. In some other cases, we need to use the partial integral operator to study this equation (see \cite{Appell2000KalZabre}).   On the other hand, if we view ${A}$ as the operator defined by ${A}:=\partial\textfractionsolidus \partial y -c(x,y) \mathcal{I}$, where $\mathcal{I}$ is the identity operator, then Eq. \eqref{eqBarbashin}  is written in the following form
$\mathcal{A}f(x,y)=\int\limits_a^b K(x,y,t)f(x,t)dt +\varphi(x,y).$ We will consider solvability Eq. \eqref{eqBarbashin} in  $L_1 (\R)$ for case operator $\mathcal{A}f(x,y)=\left(1-\frac{d^2}{dx^2}\right)\big(f \underset{H_{\left\{\substack{1\\2}\right\}}}{\ast} g\big)(x)$ with kernel $K(x,y,t) = -\frac{1}{2\sqrt{2\pi}}[h(x+y) + h(x-y) + h(-x+y) - h(-x-y)]$, domain $(a,b)=\R$, and $\varphi(x,y)=\Big(h \underset{H_{\left\{\substack{1\\2}\right\}}}{\ast} \xi\Big)(x)$. Then original equation \eqref{eqBarbashin} becomes the linear equation and can be rewritten in the convolution form
\begin{equation}\label{4.7}
\left(1-\frac{d^2}{dx^2}\right)\big(f \underset{H_{\left\{\substack{1\\2}\right\}}}{\ast} g\big) (x) + \big(f \underset{H_{\left\{\substack{1\\2}\right\}}}{\ast} h\big)(x)=\big(h \underset{H_{\left\{\substack{1\\2}\right\}}}{\ast} \xi\big)(x),
\end{equation}
where $g$, $h$, $\xi$ are given functions, convolution $( \underset{H_{\left\{\substack{1\\2}\right\}}}{\ast} )$ is defined by \eqref{eq2.5}, and $f$ is the function to find.
\begin{theorem}\label{TheoremBarbashin}
	Let $\xi \in L_1 (\R)$, $g(x)=\sqrt{\frac{\pi}{2}}e^{-|x|}$ , and $h$  be  functions belonging to $L_1(\mathbb R)$ such that $1+\big(H_{\left\{\substack{1\\2}\right\}}h\big)(y)\ne 0$, $\forall y\in\mathbb R$. Then Eq. \eqref{4.7} has the unique solution $f(x)=\Big(\eta \underset{H_{\left\{\substack{1\\2}\right\}}}{\ast} \xi\Big)(x)\in L_1(\mathbb R)$, where $\eta \in L_1(\mathbb R)$ defined by $
	\big(H_{\left\{\substack{1\\2}\right\}}\eta\big)(y) = \frac{\big(H_{\left\{\substack{1\\2}\right\}} h\big)(y)}{1+\big(H_{\left\{\substack{1\\2}\right\}} h\big)(y)}$ for all $y\in\mathbb R.
	$ Moreover, the estimation of $L_1$-norm is as follows  $
	\|f\|_{L_1(\mathbb R)} \leq \sqrt{\frac{2}{\pi}}\|\eta\|_{L_1(\mathbb R)} \|\xi\|_{L_1(\mathbb R)}.
	$
	
\end{theorem}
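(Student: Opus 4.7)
The plan is to reduce equation \eqref{4.7} to an algebraic equation in the Hartley domain, solve it there, and then recover $f$ as a Hartley convolution. Applying $H_{\left\{\substack{1\\2}\right\}}$ to both sides of \eqref{4.7}, the second term on the left and the convolution on the right collapse, by the factorization identity \eqref{eq2.7}, to $(H_{\left\{\substack{1\\2}\right\}}f)(y)(H_{\left\{\substack{1\\2}\right\}}h)(y)$ and $(H_{\left\{\substack{1\\2}\right\}}h)(y)(H_{\left\{\substack{1\\2}\right\}}\xi)(y)$, respectively.

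The decisive observation concerns the first term. Lemma \ref{lemma4.1} together with the direct computation $(H_{\left\{\substack{1\\2}\right\}}g)(y)=\frac{1}{1+y^2}$ for $g(x)=\sqrt{\pi/2}\,e^{-|x|}$ (already used, e.g., in the proof establishing that $(L_1(\mathbb{R}),\underset{H_1,F}{\ast})$ has no unit) gives
$$
H_{\left\{\substack{1\\2}\right\}}\Big\{\Big(1-\tfrac{d^2}{dx^2}\Big)\big(f \underset{H_{\left\{\substack{1\\2}\right\}}}{\ast} g\big)\Big\}(y)=(1+y^2)(H_{\left\{\substack{1\\2}\right\}}f)(y)(H_{\left\{\substack{1\\2}\right\}}g)(y)=(H_{\left\{\substack{1\\2}\right\}}f)(y).
$$
Hence the Hartley image of \eqref{4.7} is the scalar identity $(H_{\left\{\substack{1\\2}\right\}}f)(y)\big[1+(H_{\left\{\substack{1\\2}\right\}}h)(y)\big]=(H_{\left\{\substack{1\\2}\right\}}h)(y)(H_{\left\{\substack{1\\2}\right\}}\xi)(y)$, and the non-vanishing hypothesis allows division by $1+(H_{\left\{\substack{1\\2}\right\}}h)(y)$.

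To turn the resulting quotient into the Hartley transform of an $L_1$ function, I invoke the Wiener--L\'evy Lemma \ref{WienerLevyforHartley}, which produces $\eta\in L_1(\mathbb{R})$ with $(H_{\left\{\substack{1\\2}\right\}}\eta)(y)=(H_{\left\{\substack{1\\2}\right\}}h)(y)/[1+(H_{\left\{\substack{1\\2}\right\}}h)(y)]$. Then by \eqref{eq2.7} we have $(H_{\left\{\substack{1\\2}\right\}}f)(y)=H_{\left\{\substack{1\\2}\right\}}\big(\eta \underset{H_{\left\{\substack{1\\2}\right\}}}{\ast}\xi\big)(y)$, and the injectivity of the Hartley transform on $L_1(\mathbb{R})$ (inherited from the Fourier case via the correlation \eqref{eq2.2}) yields $f=\eta\underset{H_{\left\{\substack{1\\2}\right\}}}{\ast}\xi$ almost everywhere. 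The $L_1$-membership of $f$ and the announced bound $\|f\|_{L_1(\mathbb{R})}\le\sqrt{2/\pi}\|\eta\|_{L_1(\mathbb{R})}\|\xi\|_{L_1(\mathbb{R})}$ then follow at once from \eqref{3.12}.

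The main technical obstacle I foresee is that Lemma \ref{lemma4.1} is stated for $g\in C^2(\mathbb{R})$, whereas $g(x)=\sqrt{\pi/2}\,e^{-|x|}$ has a corner at the origin, so its classical second derivative is only in $L_1$ away from zero and $g'$ is discontinuous there. I plan to bypass this in the same style as the proof of Lemma \ref{lemma4.1} itself: approximate $g$ by a Schwartz sequence $\{g_n\}\subset\mathcal{S}$ with $g_n,g_n',g_n''\to g,g',g''$ in $L_1(\mathbb{R})$ and pass to the limit in the identity using the $L_1$-continuity of the Hartley convolution guaranteed by \eqref{3.12}. Equivalently, one may read $(1-d^2/dx^2)g=\sqrt{2\pi}\,\delta$ distributionally, so that $(1+y^2)(H_{\left\{\substack{1\\2}\right\}}g)(y)=1$ is immediate and the rest of the argument goes through unchanged.
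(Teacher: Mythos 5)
Your proposal reproduces the paper's own proof essentially step for step: apply $H_{\left\{\substack{1\\2}\right\}}$ to \eqref{4.7}, use Lemma \ref{lemma4.1} together with $H_{\left\{\substack{1\\2}\right\}}\big(\sqrt{\tfrac{\pi}{2}}e^{-|t|}\big)(y)=\frac{1}{1+y^2}$ to reduce the differential term to $(H_{\left\{\substack{1\\2}\right\}}f)(y)$, factor via \eqref{eq2.7}, divide by $1+(H_{\left\{\substack{1\\2}\right\}}h)(y)$, invoke the Wiener--L\'evy Lemma \ref{WienerLevyforHartley} to obtain $\eta\in L_1(\mathbb{R})$, conclude $f=\eta\underset{H_{\left\{\substack{1\\2}\right\}}}{\ast}\xi$ by injectivity of the Hartley transform on $L_1(\mathbb{R})$, and bound the norm by \eqref{3.12}. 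Your closing technical remark is actually more careful than the paper, which merely asserts that $g(x)=\sqrt{\tfrac{\pi}{2}}e^{-|x|}$ satisfies the hypotheses of Lemma \ref{lemma4.1} despite the corner at the origin; your Schwartz-approximation argument (equivalently, the distributional identity $(1-\tfrac{d^2}{dx^2})g=\sqrt{2\pi}\,\delta$, giving $(1+y^2)(H_{\left\{\substack{1\\2}\right\}}g)(y)=1$ directly) correctly justifies that step.
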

\begin{proof}
	It is easy to check $g(x)=\sqrt{\frac{\pi}{2}}e^{-|x|}$ actually satisfies the assumptions of Lemma \ref{lemma4.1}. By using \eqref{42}, we obtain
	$
	H_{\left\{\substack{1\\2}\right\}}\bigg\{\big(1-\frac{d^2}{dx^2}\big)\bigg(f \underset{H_{\left\{\substack{1\\2}\right\}}}{\ast} \sqrt{\frac{\pi}{2}} e^{-|t|}\bigg)(x)\bigg\}(y) = (1+y^2) H_{\left\{\substack{1\\2}\right\}} \big(f \underset{H_{\left\{\substack{1\\2}\right\}}}{\ast} \sqrt{\frac{\pi}{2}} e^{-|t|}\big)(y).
	$  Applying the $H_{\left\{\substack{1\\2}\right\}}$ transform to both sides of \eqref{4.7} combining the factorization equality \eqref{eq2.7}, we get
	$$
	(1+y^2) H_{\left\{\substack{1\\2}\right\}}\left(f \underset{H_{\left\{\substack{1\\2}\right\}}}{\ast} \sqrt{\frac{\pi}{2}} e^{-|t|}\right)(y) + H_{\left\{\substack{1\\2}\right\}}\big(f \underset{H_{\left\{\substack{1\\2}\right\}}}{\ast} h\big)(y)= H_{\left\{\substack{1\\2}\right\}}\big(h \underset{H_{\left\{\substack{1\\2}\right\}}}{\ast}\xi\big)(y),\forall y>0,
	$$ equivalent to $(1+y^2)\big(H_{\left\{\substack{1\\2}\right\}}f\big)(y) H_{\left\{\substack{1\\2}\right\}}\left(\sqrt{\frac{\pi}{2}}e^{-|t|}\right)(y) + \big(H_{\left\{\substack{1\\2}\right\}} f\big)(y)\big(H_{\left\{\substack{1\\2}\right\}} h\big)(y)
	= \big(H_{\left\{\substack{1\\2}\right\}} h\big)(y) \big(H_{\left\{\substack{1\\2}\right\}}\xi\big)(y).$ Since $H_{\left\{\substack{1\\2}\right\}}\Big(\sqrt{\frac{\pi}{2}}e^{-|t|}\Big)(y) = \frac{1}{1+y^2}$ is finite (see \cite{bateman1954}, 1.4.1, page 23), under the condition $1+\big(H_{\left\{\substack{1\\2}\right\}}h\big)(y)\ne 0$, $\forall y\in\mathbb R$, we deduce that $\big(H_{\left\{\substack{1\\2}\right\}} f\big)(y)=\frac{\big(H_{\left\{\substack{1\\2}\right\}}h\big)(y)}{1+\big(H_{\left\{\substack{1\\2}\right\}} h\big)(y)} \big(H_{\left\{\substack{1\\2}\right\}} \xi\big)(y)$.
	Following Lemma \ref{WienerLevyforHartley},  there exists a function $\eta$ which belongs to $L_1(\mathbb R_+)$ such that
	$$
	\big(H_{\left\{\substack{1\\2}\right\}}\eta\big)(y)=\frac{\big(H_{\left\{\substack{1\\2}\right\}} h\big)(y)}{1+\big(H_{\left\{\substack{1\\2}\right\}}h\big)(y)},\quad \forall y\in \mathbb{R}.
	$$
	Thus, we infer that $\big(H_{\left\{\substack{1\\2}\right\}} f\big)(y)=\big(H_{\left\{\substack{1\\2}\right\}}\eta\big)(y)\big(H_{\left\{\substack{1\\2}\right\}}\xi\big)(y)=H_{\left\{\substack{1\\2}\right\}}\big(\eta \underset{H_{\left\{\substack{1\\2}\right\}}}{\ast} \xi\big)(y)$, implies that $
	f(x)=\big(\eta\underset{H_{\left\{\substack{1\\2}\right\}}}{\ast}\xi\big)(x)$ almost everywhere on $\R$. Since $\eta,\xi\in L_1(\mathbb R)$, then $f$ belongs to $L_1(\mathbb R)$ (see \cite{ThaoHVA2014MMA,Tuan2022MMA}).  Applying the inequality \eqref{3.12}, we obtain the boundedness in $L_1$ of solution$
	\|f\|_{L_1(\mathbb R)} \leq \sqrt{\frac{2}{\pi}}\|\eta\|_{L_1(\mathbb R)}\|\xi\|_{L_1(\mathbb R)}.
	$
\end{proof}
\begin{remark}\label{remark4.2}
	Let  $p,q,r>1$ such that $\frac{1}{p}+\frac{1}{q}=1+\frac{1}{r}$ and if $f\in L_r(\mathbb R)$, $\eta\in L_p(\mathbb R)$, and $\xi\in L_q(\mathbb R)$. Applying the inequality \eqref{3.11}, we
	get the following solution estimate
	$
	\|f\|_{L_r(\mathbb R)} \leq \sqrt{\frac{2}{\pi}}\|\eta\|_{L_p(\mathbb R)} \|\xi\|_{L_q(\mathbb R)}.
	$
\end{remark}
\begin{remark}\label{remark3}
If there is a case where $\eta$ is the product of two functions ($\eta=\vp.\rho$), $\rho $ is a  positive function belonging to $L_1 (\R)$, and $\vp\in L_1 (\R,\rho)\cap L_p (\R,\rho)$, with $p>1$. For any function $\xi \in L_1 (\R)\cap L_p (\R)$, apply \eqref{eq3.20}, then we have the following estimate
$\lv f\lv_{L_p (\R)} \leq \frac{1}{\pi}(\sqrt{2\pi})^{\frac{1}{p}} \lv \rho\lv^{1-\frac{1}{p}}_{L_1 (\R)}\lv\vp.\xi\lv_{L_p (\R,\rho)}\lv\xi \lv_{L_p (\R)}$.
\end{remark}
\subsection{The Cauchy-type initial value problem}
We consider the following Cauchy-type problem
\begin{equation}\label{Cauchy-type}
\left\{\begin{array}{l}
f(x)-f''(x)+\big(1-\frac{d^2}{dx^2}\big)\big(f \underset{H_1,F}{\ast}g\big)(x)=\big(h \underset{H_1,F}{\ast} g\big)(x),\\
\lim\limits_{|x|\to \infty} f(x) =\lim\limits_{|x|\to \infty} f'(x) =0,
\end{array}\right.
\end{equation}
where $g$ is a given function belonging to $g\in L_1(\mathbb R)$, $h\in C^2(\mathbb R)\cap L_1(\mathbb R)$ such as $h', h'' \in L_1(\mathbb R)$, and $f$ is the function to find.
\begin{theorem}\label{Theorem45}
	With the assumption of the function h, g us given as above, so that the condition $1+(Fg)(y)\ne 0$ for any $y\in \mathbb R$. Then problem \ref{Cauchy-type}  has a unique solution in $C^2(\mathbb R)\cap L_1(\mathbb R)$ and is represented by \\$
	f(x) = \left(\big(h\underset{H_1,F}{\ast} \xi\big)\underset{H_1,F}{\ast} \sqrt{\frac{\pi}{2}}e^{-|t|}\right)(x). 
	$ Moreover, we have $
	\|f\|_{L_1(\mathbb R)} \leq 2\sqrt{\frac{2}{\pi}}\|h\|_{L_1(\mathbb R)} \|\xi\|_{L_1(\mathbb R)},
	$ where $\xi$ is a
	function beloning to $L_1(\mathbb R)$  determined by
	$
	(F\xi)(y)=\frac{(Fg)(y)}{1+(Fg)(y)},$ and $(\underset{H_1,F}{\ast})$ defined by \eqref{eq2.4}.
\end{theorem}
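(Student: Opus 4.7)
The plan is to mirror the strategy used for Theorem \ref{TheoremBarbashin}: apply the Hartley transform $H_1$ to both sides of the equation, employ the factorization identity \eqref{eq2.6} together with an adaptation of Lemma \ref{lemma4.1} to convert the Cauchy-type problem into an algebraic equation in the transform domain. From the boundary conditions $\lim_{|x|\to\infty}f(x)=\lim_{|x|\to\infty}f'(x)=0$, integration by parts (exactly as in \eqref{4.3}) gives $H_1(f-f'')(y)=(1+y^2)(H_1 f)(y)$. The Schwartz-density argument used in Lemma \ref{lemma4.1}, justified by $g\in L_1(\R)$ and the smoothness/decay on $h$, yields
\[
H_1\Big\{\big(1-\tfrac{d^2}{dx^2}\big)\big(f \underset{H_1,F}{\ast}g\big)\Big\}(y)=(1+y^2)(H_1f)(y)(Fg)(y),
\]
so that, combined with $H_1(h\underset{H_1,F}{\ast}g)(y)=(H_1 h)(y)(Fg)(y)$, the transformed equation reads $(1+y^2)(H_1 f)(y)[1+(Fg)(y)]=(H_1 h)(y)(Fg)(y)$.

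Next I invoke the classical Wiener-L\'evy theorem for the Fourier transform (the statement used to launch the proof of Lemma \ref{WienerLevyforHartley}): the hypothesis $1+(Fg)(y)\neq 0$ for all $y\in\R$ produces $\xi\in L_1(\R)$ with $(F\xi)(y)=\frac{(Fg)(y)}{1+(Fg)(y)}$. A standard table entry (see \cite{bateman1954}, 1.4.1) gives $H_1\bigl(\sqrt{\pi/2}\,e^{-|t|}\bigr)(y)=\tfrac{1}{1+y^2}$, and because $\sqrt{\pi/2}\,e^{-|t|}$ is even, the sine components in both kernels drop out, so $(F\sqrt{\pi/2}\,e^{-|t|})(y)=\tfrac{1}{1+y^2}$ as well. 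Dividing through and using these identifications gives
\[
(H_1 f)(y)=(H_1 h)(y)\cdot(F\xi)(y)\cdot \bigl(F\sqrt{\pi/2}\,e^{-|t|}\bigr)(y),
\]
which by two successive applications of the factorization \eqref{eq2.6} equals $H_1\!\bigl((h\underset{H_1,F}{\ast}\xi)\underset{H_1,F}{\ast}\sqrt{\pi/2}\,e^{-|t|}\bigr)(y)$. Injectivity of $H_1$ on $L_1(\R)$ (the same ingredient that drove Theorem \ref{TitchmarshTheorem}) then delivers the representation and, by an identical argument, uniqueness.

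For the norm estimate I apply Young's inequality \eqref{3.9} twice. First $\|h\underset{H_1,F}{\ast}\xi\|_{L_1(\R)}\le\sqrt{2/\pi}\,\|h\|_{L_1(\R)}\|\xi\|_{L_1(\R)}$, and since $\|\sqrt{\pi/2}\,e^{-|t|}\|_{L_1(\R)}=\sqrt{2\pi}$, a second application yields
\[
\|f\|_{L_1(\R)}\le \sqrt{\tfrac{2}{\pi}}\cdot\sqrt{\tfrac{2}{\pi}}\,\|h\|_{L_1(\R)}\|\xi\|_{L_1(\R)}\cdot\sqrt{2\pi}=2\sqrt{\tfrac{2}{\pi}}\,\|h\|_{L_1(\R)}\|\xi\|_{L_1(\R)},
\]
as claimed. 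Regularity $f\in C^2(\R)$ follows by differentiation under the integral sign in the convolution representation, legitimized by $h,h',h''\in L_1(\R)$.

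The principal obstacle, explicitly flagged by the author in the introduction, is verifying that the constructed $f$ actually meets the decay requirements $\lim_{|x|\to\infty}f(x)=\lim_{|x|\to\infty}f'(x)=0$, since on $L_1(\R)$ the Hartley transform fails to be a unitary isomorphism and Theorem 68 of \cite{Titchmarsh1986} breaks down. To close this gap I will follow the Wiener-Tauberian route indicated via \cite{NaimarkMA1972,Winer1932}: the kernel $\sqrt{\pi/2}\,e^{-|t|}$ has transform $1/(1+y^2)$ which vanishes nowhere on $\R$, so its translates span a dense subspace of $L_1(\R)$ in the Wiener-Tauberian sense, and this density, combined with the explicit factorization $f=(h\underset{H_1,F}{\ast}\xi)\underset{H_1,F}{\ast}\sqrt{\pi/2}\,e^{-|t|}$, transfers the decay of the exponential kernel to $f$ and $f'$ at infinity, confirming that the candidate solution indeed satisfies the initial-value conditions.
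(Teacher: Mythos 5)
Your proposal is correct and follows essentially the same route as the paper: applying $H_1$ with the factorization identity \eqref{eq2.6} and the adapted Lemma \ref{lemma4.1} to reach $(1+y^2)(H_1f)(y)\left[1+(Fg)(y)\right]=(H_1h)(y)(Fg)(y)$, invoking Wiener--L\'evy for the Fourier transform to produce $\xi$, identifying $\frac{1}{1+y^2}$ as the transform of $\sqrt{\pi/2}\,e^{-|t|}$, and applying \eqref{3.9} twice to get the constant $2\sqrt{2/\pi}$. The only difference is presentational: where you cite the Wiener--Tauberian density of translates abstractly to obtain the decay of $f$ and $f'$, the paper executes that step concretely by splitting $f=\big(G\underset{H_1,F}{\ast}\sqrt{\tfrac{\pi}{2}}e^{-|t|}\big)$ into four Fourier convolutions $J_i=\pm\big(e^{-|t|}\underset{F}{\ast}G\big)$, verifying the Tauberian hypothesis via the explicit computation $\big(e^{-|t|}\underset{F}{\ast}e^{-|t|}\big)(x)\to 0$ as $|x|\to\infty$, and repeating the argument for $f'$ with $G'\in L_1(\mathbb R)$.
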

\begin{proof}
	Without loss of generality, we may assume that $f \in C^2(\mathbb R)\cap L_1(\mathbb R)$ is a satisfied function condition of the Problem \eqref{Cauchy-type}. Applying Lemma \ref{lemma4.1} to the function $f$ instead of the role of $g$, by \eqref{42}, we obtain
	\begin{equation}\label{4.10}
	H_1\left(\left(1-\frac{d^2}{dx^2}\right)(f\underset{H_1,F}{\ast} g)(x)\right)(y)=(1+y^2) H_1(f\underset{H_1,F}{\ast} g)(y),\quad \forall g\in L_1(\mathbb R),~y\in\mathbb R.
	\end{equation}	
	From \eqref{4.3}, we deduce that \begin{equation}\label{4.11}
	\left(H_1f''(x)\right)(y) = -y^2(H_1f)(y).
	\end{equation}	
	Applying the $(H_1)$ transform to both sides of	Eq. \eqref{Cauchy-type}, combining \eqref{4.10}, \eqref{4.11}, and using  the factorization property \eqref{eq2.6}, we infer 
	$
	(H_1f)(y) + y^2(H_1f)(y) + (1+y^2)H_1(f\underset{H_1,F}{\ast} g) (y) = H_1(f\underset{H_1,F}{\ast} g)(y),\forall y\in\mathbb R.
	$ This equivalent to $
	(1+y^2)(H_1f)(y)[1+(Fg)(y)]=(H_1h)(y) (Fg)(y),
	$ implies 
	\begin{equation}\label{4.12}
	(H_1f)(y)=\frac{1}{1+y^2}\frac{(Fg)(y)}{1+(Fg)(y)}(H_1h)(y).
	\end{equation}
	By Wiener-Levy's Theorem \cite{NaimarkMA1972} for the Fourier transform, we deduce  the existence of a function $\xi$ belonging to $L_1(\mathbb R)$ such that
	$
	(F\xi)(y)=\frac{(Fg)(y)}{1+(Fg)(y)}.
	$
	On the other hand, $F\left(\sqrt{\frac{\pi}{2}}e^{-|t|}\right)(y)=\frac{1}{1+y^2}$ \cite{bateman1954} is finite, and  coupling \eqref{4.12} with \eqref{eq2.6}, we have
	$$
	(H_1f)(y) = H_1\left((h\underset{H_1,F}{\ast} \xi) \underset{H_1,F}{\ast} \sqrt{\frac{\pi}{2}}e^{-|t|}\right)(y).
	$$ Therefore $
	f(x) =\left((h\underset{H_1,F}{\ast} \xi)\underset{H_1,F}{\ast} \sqrt{\frac{\pi}{2}} e^{-|t|}\right)(x)$ almost everywhere for any $x\in \R.
	$  Moreover, since $h, \xi,$ and $\sqrt{\frac{\pi}{2}} e^{-|t|}$  are functions belonging to $L_1 (\R)$, by \cite{Tuan2022MMA,ThaoHVA2014MMA} lead to $f\in L_1 (\R)$. We putting $G(x)=(h \underset{H_1,F}{\ast} \xi)(x) \in L_1(\mathbb R)$. Thus $h \in C^2(\mathbb R)\cap L_1(\mathbb R)$, and $h', h'', \xi\in L_1(\mathbb R)$, by \eqref{eq2.4}, we infer $G\in C^2(\mathbb R)\cap L_1(\mathbb R)$. This means that $f\in C^2(\mathbb R)\cap L_1(\mathbb R)$.
	
	Now, we prove solution $f(x)$ of
	Problem \eqref{Cauchy-type} actually satisfies the initial condition. Indeed,  by setting $G(x)$, we obtain
	\begin{equation}\label{4.13}
	\begin{aligned}
	f(x)&=\left(G \underset{H_1,F}{\ast} \sqrt{\frac{\pi}{2}} e^{-|t|}\right)(x) = \frac{\sqrt{\pi}}{2\sqrt{2}}\bigg\{\frac{1}{\sqrt{2\pi}}\int_{\R} e^{-|y|} G(x+y) dy + \frac{1}{\sqrt{2\pi}} \int_{\R} e^{-|y|} G(x-y) dy\\ &+ \frac{i}{\sqrt{2\pi}}\int_{\R} e^{-|y|} G(-x-y)dy - \frac{i}{\sqrt{2\pi}}\int_{\R} e^{-|y|}G(-x+y)dy\bigg\}\\
	&= 2\sqrt{\frac{\pi}{2}}\big\{J_1(x) + J_2(x) + iJ_3(x) - iJ_4(x)\big\},\quad \forall x\in\mathbb R.
	\end{aligned}\end{equation}	
	Observe the integral $J_1$ then
	$
	J_1(x)=\frac{1}{\sqrt{2\pi}} \int\limits_{\R} e^{-|y|} G(x+y)dy =-\frac{1}{\sqrt{2\pi}}\int\limits_{\R}  e^{-|t|}G(x-t)dt=-\big(e^{-|t|}\underset{F}{\ast} G\big)(x).
	$ We apply the Wiener-Tauberian's theorem \cite{Winer1932} to the function $\omega(t)=e^{-|t|}$ with bounded $\forall t\in\mathbb R$, and $\Phi = e^{-|x-y|}\in L_1(\mathbb R)$. It is easy to see $
	F\left(e^{-|x-y|}\right) = \sqrt{\frac{2}{\pi}} \frac{1}{1+(x-y)^2}\ne 0, \forall x, y \in \mathbb R.
	$ On the other hand, 
	\begin{eqnarray*}
		\big(\omega\underset{F}{\ast} \Phi\big)(x) = \frac{1}{\sqrt{2\pi}}\int_{\R} e^{(-|y|-|x-y|)} dy= \left\{\begin{array}{ll}
			\frac{1}{\sqrt{2\pi}}(x+1)e^{-x}&\quad \text{if}\ x>0,\\
			\frac{1}{\sqrt{2\pi}}(1-x)e^x&\quad \text{if}\ x<0,
		\end{array}\right.\quad\text{tends to}\ 0\ \text{when }|x|\to\infty.
	\end{eqnarray*}
	By Wiener-Tauberian's theorem, we deduce that $
	\lim\limits_{|x|\to \infty} J_1(x) = - \lim\limits_{|x|\to \infty}\big(e^{-|t|}\underset{F}{\ast} G\big)(x) =0, \forall G\in L_1(\mathbb R).
	$ By the same argument as above, we obtain $
	\lim\limits_{|x|\to \infty}J_i(x)=0,$ with $i=2,3,4. 
	$ From \eqref{4.13} we get $
	\lim_{|x|\to \infty}f(x)=0.
	$ Furthermore,  we also deduce $
	f'(x)=2\sqrt{\frac{\pi}{2}}\left\{J'_1(x) - J'_2(x) - iJ'_3(x) + iJ'_4(x)\right\},
	$ where for each integral $J'_i(x)$, with $i=\overline{1,4}$ can be represented by the Fourier convolution of the form $\big(e^{-|t|}\underset{F}{\ast} G'\big)$, $\forall G'\in L_1(\mathbb R)$. Hence, we apply the Wiener-Tauberian's theorem again \cite{Winer1932}(also refer \cite{WRudin1987}) to the function $G'$, with $\omega$, $\Phi$ as above  implies that $
	\lim\limits_{|x|\to\infty} J'_i(x) = 0,
	$ with $i=\overline{1,4}$. This yields  $\lim\limits_{|x|\to\infty}f'(x)=0$, and we can conclude that  $f(x)$ satisfies the initial conditions of the Problem \eqref{Cauchy-type}. Applying the equality \eqref{3.9}, we obtain the boundedness in $L_1$ of solution as follows
	$$\begin{aligned}
	\|f\|_{L_1(\mathbb R)} &=\big\|(h \underset{H_1,F}{\ast}\xi) \underset{H_1,F}{\ast} \sqrt{\frac{\pi}{2}}e^{-|t|}\big\|_{L_1(\mathbb R)}\leq  \sqrt{\frac{2}{\pi}}\big\|h\underset{H_1,F}{\ast} \xi\big\|_{L_1(\mathbb R)} \big\|\sqrt{\frac{\pi}{2}}e^{-|t|}\big\|_{L_1(\mathbb R)}\\
	&\leq \sqrt{\frac{2}{\pi}}\|h\|_{L_1(\mathbb R)} \|\xi\|_{L_1(\mathbb R)} \|e^{-|t|}\|_{L_1(\mathbb R)}=2\sqrt{\frac{2}{\pi}}\|h\|_{L_1(\mathbb R)}\|\xi\|_{L_1(\mathbb R)}.
	\end{aligned}$$
\end{proof}
\begin{remark}\label{remmark4.3}
	Assume that $p, q, r, s>1$ and satisfy the condition $\frac{1}{p}+\frac{1}{q}+\frac{1}{r}=2+\frac{1}{s}$. If $f\in L_s(\mathbb R)$, $h\in L_p(\mathbb R)$, and $\xi\in L_q(\mathbb R)$, based on \eqref{eq3.8}, we
	get the following solution of Problem \eqref{Cauchy-type} estimate
	$$
	\|f\|_{L_s(\mathbb R)} \leq \sqrt{\frac{2}{\pi}}\left(\frac{2}{r}\right)^{\frac{1}{r}}\|h\|_{L_p(\mathbb R)}\|\xi\|_{L_q(\mathbb R)}.
	$$
\end{remark}
\section{Examples}
\noindent In the last section of article, we give numerical examples to illustrate the calculation to ensure the validity and applicability of the results in Section 4.
First, we get started with an illustrative example of Theorem \ref{L1redholm} and Remark \ref{remark4.1}.
\begin{example}\label{vd1}
	Let 
	$
	\varphi(x)=\xi(x)=\sqrt{\frac{\pi}{2}}e^{-|x|}$. It is easy to check $\varphi(x)$ and $\xi$ are functions belonging to  $ L_1(\mathbb R).$ According to formula 1.4.1, p. 23 in \cite{bateman1954}, we have
	$
	H_1\left(\sqrt{\frac{\pi}{2}}e^{-|x|} \right)(y) =\frac{1}{1+y^2}.
	$\\ Therefore $1+(H_1\varphi)(y)\ne 0$, $\forall y\in\mathbb R$ and 
	$
	\frac{(H_1\varphi)(y)}{1+(H_1\varphi)(y)}=\frac{1}{2+y^2} = (H_1\ell)(y),
	$ by Lemma \ref{WienerLevyforHartley}, we infer there existence of a function $\ell \in L_1 (\R)$ is satisfied. In this case $\ell(x)=\frac{\sqrt{\pi}}{2}e^{-\sqrt{2}|x|}\in L_1(\mathbb R)$. We obtain the solution in $L_1 (\R)$ of Eq. \eqref{4.5} as follows $
	f(x)=\left(\frac{\sqrt{\pi}}{2}e^{-\sqrt{2}|t|} \underset{H_1,F}{\ast} \sqrt{\frac{\pi}{2}}e^{-|t|}\right)(x).
	$ We get the $L_1$-norm estimation of $f(x)$ as follows $
	\|f\|_{L_1(\mathbb R)} \leq \sqrt{\frac{2}{\pi}}\big\|\frac{\sqrt{\pi}}{2}e^{-\sqrt{2}|t|}\big\|_{L_1(\mathbb R)} \big\|\sqrt{\frac{\pi}{2}}e^{-|t|}\big\|_{L_1(\mathbb R)} =\sqrt{2\pi}.
	$
	Moreover, if there exist real numbers $p, q, r>1$ and satisfy $\frac{1}{p}+\frac{1}{q}=1+\frac{1}{r}$, then
$$
		\|f\|_{L_r(\mathbb R)}\leq  \sqrt{\frac{2}{\pi}}\big\|\frac{\sqrt{\pi}}{2}e^ {-\sqrt{2}|t|} \big\|_{L_p(\mathbb R)}\big\|\sqrt{\frac{\pi}{2}}e^{-|t|} \big\|_{L_q(\mathbb R)}= \frac{\sqrt{\pi}}{2}\left(\frac{\sqrt{2}}{p}\right)^{\frac{1}{p}}\left(\frac {2}{q}\right)^{\frac{1}{q}}.
$$
For case $r=\infty$, from inequality \eqref{case=vocung}, we obtain $\|f\|_{L_\infty(\mathbb R)}\leq \frac{\sqrt{\pi}}{2}\left(\frac{\sqrt{2}}{p}\right)^{\frac{1}{p}}\left(\frac {2}{q}\right)^{\frac{1}{q}}.$
\end{example}
Next example illustrates the Theorem \ref{TheoremBarbashin}.
\begin{example}
	Choose functions $
	h(x)=\xi(x)=\sqrt{\frac{\pi}{2}} e^{-|x|}. 
	$ Obviously, these functions are belonging to $L_1 (\R)$.  According to formula 1.4.1, p. 23 in \cite{bateman1954}, we deduce that $
	H_{\left\{\substack{1\\2}\right\}}\left(\sqrt{\frac{\pi}{2}}e^{-|x|}\right)(y)=\frac{1}{1+y^2}.
	$\\ Thus $1+H_{\left\{\substack{1\\2}\right\}}\left(\sqrt{\frac{\pi}{2}}e^{-|x|}\right) \neq 0, \forall y \in \R$ and $
	\frac{(H_{\left\{\substack{1\\2}\right\}}h)(y)}{1+(H_{\left\{\substack{1\\2}\right\}}h)(y)}=\frac{1}{2+y^2} = (H_{\left\{\substack{1\\2}\right\}}\eta)(y),
	$ by Lemma \ref{WienerLevyforHartley}, we infer there existence of a function $\eta=\frac{\sqrt{\pi}}{2}e^{-\sqrt{2}|x|} \in L_1 (\R)$ is satisfied. Therefore, Eq. \eqref{4.7} has the unique solution $f(x)=\bigg(\frac{\sqrt{\pi}}{2}e^{-\sqrt{2}|t|}\underset{H_{\left\{\substack{1\\2}\right\}}}{*} \sqrt{\frac{\pi}{2}}e^{-|t|}\bigg)(x)$ belonging to $L_1 (\R).$ The estimate of $f(x)$ is similar in Example \ref{vd1}  means $\|f\|_{L_1 (\R)} \leq \sqrt{2\pi}$ and $\|f\|_{L_r (\R)} \leq \frac{\sqrt{\pi}}{2}\left(\frac{\sqrt{2}}{p}\right)^{\frac{1}{p}}\left(\frac {2}{q}\right)^{\frac{1}{q}}.$
	
\noindent	Now we rewrite function $\eta$ in the following form: $$\eta=\frac{\sqrt{\pi}}{2}e^{-\sqrt{2}|t|}=\frac{\sqrt{\pi}}{2}e^{-\frac{\sqrt{2}}{2}|t|}.e^{-\frac{\sqrt{2}}{2}|t|}.$$
	We set $\vp=\frac{\sqrt{\pi}}{2}e^{-\frac{\sqrt{2}}{2}|t|}$ and weighted function $\rho=e^{-\frac{\sqrt{2}}{2}|t|}$. Clearly, in this case, $\vp \in L_1 (\R,\rho)\cap L_p (\R,\rho)$ and $\rho$ is a positive function belonging to $L_1 (\R)$. According to Remark \ref{remark3}, we obtain
	$$\begin{aligned}
	\lv f\lv_{L_p (\R)} &\leq \frac{1}{\pi}(\sqrt{2\pi})^{\frac{1}{p}}\lv e^{-\frac{\sqrt{2}}{2}|t|}\lv_{L_1 (\R)}^{1-\frac{1}{p}} \lv\frac{\sqrt{\pi}}{2}e^{-\frac{\sqrt{2}}{2}|t|}\lv_{L_p (\R,\rho)} \lv \frac{\sqrt{\pi}}{2}e^{-\frac{\sqrt{2}}{2}|t|}\lv_{L_p (\R)},\ \forall p>1.\\&
	\leq \sqrt{2}\bigg(\frac{2\sqrt{2\pi}}{p(p+1)} \bigg)^{\frac{1}{p}}.
	\end{aligned}$$
\end{example}
We end the article with an example for the Theorem \ref{Theorem45}.
\begin{example}
A simple example is provided by  $g=h=\sqrt{\frac{\pi}{2}}e^{-|x|}$. It is not difficult to check that  $g$ belongs to $L_1(\mathbb R)$, and $h\in C^2(\mathbb R)\cap L_1(\mathbb R)$. Obviously  $h', h'' \in L_1(\mathbb R)$, following \cite{bateman1954}, we have $F\big(\sqrt{\frac{\pi}{2}}e^{-|x|}\big)(y)=\frac{1}{1+y^2}$, therefore $1+(Fg)(y)\ne 0$, $\forall y\in\mathbb R$. Moreover
$
(F\xi)(y)=\frac{(Fg)(y)}{1+(Fg)(y)}=\frac{1}{2+y^2},
$ so there is the function $\xi(x)=\frac{\sqrt{\pi}}{2}e^{-\sqrt{2}|x|} \in L_1(\mathbb R)$. In this case, the solution of Problem \eqref{Cauchy-type} is $$
f(x)=\left(\left(\sqrt{\frac{\pi}{2}}e^{-|t|}\underset{H_1,F}{\ast} \frac{\sqrt{\pi}}{2}e^{-\sqrt{2}|t|}\right) \underset{H_1,F}{\ast} \sqrt{\frac{\pi}{2}}e^{-|t|}\right)(x),
$$
and boundedness in $L_1$ is as follows $
\|f\|_{L_1(\mathbb R)} \leq 2\sqrt{\frac{2}{\pi}}\big\|\frac{\sqrt{\pi}}{2}e^{-\sqrt{2}|t|}\big\|_{L_1(\mathbb R)} \big\|\sqrt{\frac{\pi}{2}}e^{-|t|}\big\|_{L_1(\mathbb R)}=2\sqrt{2\pi}.
$\\ Furthermore, if we assume that $p, q, r, s>1$ satisfy condition $\frac{1}{p}+\frac{1}{q}+\frac{1}{r}=2+\frac{1}{s}$, we obtain
$$
	\|f\|_{L_s(\mathbb R)}\leq  \sqrt{\frac{2}{\pi}}\left(\frac{2}{r}\right)^{\frac{1}{r}}\left\|\sqrt{\frac{\pi}{2}}e^{-|t|}\right\|_{L_p(\mathbb R)} \left\|\frac{\sqrt{\pi}}{2}e^{-\sqrt{2}|t|}\right\|_{L_q(\mathbb R)}
	=\frac{\sqrt{\pi}}{2}\left(\frac{2}{r}\right)^\frac{1}{r}\left(\frac{2}{p}\right)^\frac{1}{p}\left(\frac{\sqrt{2}}{q}\right)^\frac{1}{q}.$$
\end{example}
\vskip 0.4cm

\noindent{\textbf{Acknowledgements.}} 
The author would like to thank the referee for carefully reading the manuscript and giving valuable comments to improve the article.
\vskip 0.4cm
\noindent \textbf{DATA AVAILIBILITY} \\
My manuscript has no associated data.

\vskip 0.4cm
\noindent \textbf{FUNDING STATEMENT}\\
\noindent No funding was received for conducting this study.

\vskip 0.4cm
\noindent \textbf{CONFLICT OF INTEREST}\\
\noindent The author has no conflicts of interest to declare that are relevant to the content of this article.

\vskip 0.4cm
\noindent \textbf{ORCID}\\
\noindent \textit{Trinh Tuan} {\color{blue}\url{ https://orcid.org/0000-0002-0376-0238}}

\bibliographystyle{plain}
\bibliography{referecesTLTK}

\begin{thebibliography}{10}

\bibitem{AdamsFournier2003sobolev}
R.~A. Adams and J.~F. Fournier.
\newblock {\em Sobolev {S}paces}.
\newblock Pure and Applied Mathematics. Vol. 140, Elsevier/Academic Press,
  Amsterdam, 2 edition, 2003.

\bibitem{Appell2000KalZabre}
J.~M. Appell, A.~S. Kalitvin, and P.~P. Zabrejko.
\newblock {\em Partial {I}ntegral {O}perators and {I}ntegro-{D}ifferential
  {E}quations: pure and applied mathematics}.
\newblock Marcel Dekker Inc, New York, 2000.
\newblock \url{https://doi.org/10.1201/9781482270402}.

\bibitem{Barbashin1957}
E.~A. Barbashin.
\newblock Conditions for invariance of stability of solutions of
  integro-differential equations.
\newblock {\em (in Russian). Izv. Vyssh. Uchebn. Zaved. Mat.}, 1:25--34, 1957.

\bibitem{bateman1954}
H.~Bateman and A.~Erdélyi.
\newblock {\em Tables of {I}ntegral {T}ransforms}.
\newblock McGraw-Hill Book Company, Inc, New York-London, 1954.

\bibitem{Bracewell1986}
R.~N Bracewell.
\newblock {\em The {H}artley {T}ransform}.
\newblock The Clarendon Press, Oxford University Press, New York, 1986.

\bibitem{castro2019NMTuan}
L.~P. Castro, R.~C. Guerra, and N.~M. Tuan.
\newblock On {W}iener's {T}auberian theorems and convolution for oscillatory
  integral operators.
\newblock {\em Turk. J. Math.}, 43(3):1124--1147, 2019.
\newblock \url{https://doi.org/10.3906/mat-1801-90}.

\bibitem{castro2013}
L.~P. Castro and D.~Kapanadze.
\newblock Wave diffraction by a half-plane with an obstacle perpendicular to
  the boundary.
\newblock {\em Journal of Differential Equations}, 254(2):493--510, 2013.
\newblock \url{https://doi.org/10.1016/j.jde.2012.08.030}.

\bibitem{Debnath2006Bhatta}
L.~Debnath and B.~Bhatta.
\newblock {\em Integral {T}ransforms and {T}heir {A}pplications}.
\newblock Chapman \& Hall/CRC Press, New York, 2 edition, 2006.
\newblock \url{https://doi.org/10.1201/9781420010916}.

\bibitem{giang2009MauTuan}
B.~T. Giang, N.~V. Mau, and N.~M. Tuan.
\newblock Operational properties of two integral transforms of {F}ourier type
  and their convolutions.
\newblock {\em Integr. Equ. Oper. Theory.}, 65(3):363--386, 2009.
\newblock \url{https://doi.org/10.1007/s00020-009-1722-x}.

\bibitem{hoangtuan2017thaovkt}
P.~V. Hoang, T.~Tuan, N.~X. Thao, and V.~K. Tuan.
\newblock Boundedness in weighted ${L_p}$ spaces for the
  {K}ontorovich--{L}ebedev--{F}ourier generalized convolutions and
  applications.
\newblock {\em Integral Transforms Spec. Funct.}, 28(8):590--604, 2017.
\newblock \url{https://doi.org/10.1080/10652469.2017.1330825}.

\bibitem{NaimarkMA1972}
M.~A. Naimark.
\newblock {\em Normed {A}lgebras}.
\newblock Wolters-Noordhoff Publishing, Groningen, 1972.

\bibitem{Poularikas1996}
A.~D. Poularikas~(Ed.).
\newblock {\em The {T}ransform and {A}pplications {H}andbook}.
\newblock (The Electrical Engineering Handbook), CRC Press, Boca Raton, 3
  edition, 2010.
\newblock \url{https://doi.org/10.1201/9781315218915}.

\bibitem{Pruss2013evolutionary}
J.~Pr{\"u}ss.
\newblock {\em Evolutionary {I}ntegral {E}quations and {A}pplications}.
\newblock Monographs in Mathematics, vol. 87. Birkh{\"a}user Verlag, Basel,
  1993.

\bibitem{WRudin1987}
W.~Rudin.
\newblock {\em Real and {C}omplex {A}nalysis}.
\newblock McGraw-Hill Book Company, New York, 3 edition, 1987.

\bibitem{Saitoh2000}
S.~Saitoh.
\newblock Weighted ${L}_p$-norm inequalities in convolutions.
\newblock {\em Rassias, T.M. (ed.) Survey on Classical Inequalities,
  Mathematics and its Applications, vol. 517. Springer, Dordrecht}, pages
  225--234, 2000.
\newblock \url{https://doi.org/10.1007/978-94-011-4339-4_8}.

\bibitem{Sneddon1972}
I.~N. Sneddon.
\newblock {\em The {U}se of {I}ntegral {T}ransform}.
\newblock McGraw-Hill Book Company, Inc, New York (NY), London, 1972.

\bibitem{Sogge1993fourier}
C.~D. Sogge.
\newblock {\em Fourier {I}ntegrals in {C}lassical {A}nalysis}.
\newblock Cambridge University Press, Cambridge, 1993.

\bibitem{Srivastava1992Buschman}
H.~M. Srivastava and R.~G. Buschman.
\newblock {\em Theory and {A}pplications of {C}onvolution {I}ntegral
  {E}quations}.
\newblock Mathematics and Its Applications, Vol. 79. Kluwer Academic
  Publishers, Dordrecht, 1992.
\newblock \url{https://doi.org/10.1007/978-94-015-8092-2}.

\bibitem{Stein1971Weiss}
E.~M. Stein and G.~Weiss.
\newblock {\em Introduction to {F}ourier {A}nalysis on {E}uclidean {S}paces}.
\newblock Princeton Mathematical Series (PMS-32), Volume 32, Princeton
  University Press, NJ, 1971.

\bibitem{Mezhlum2004Antonio}
M.~A. Sumbatyan and A.~Scalia.
\newblock {\em Equations of {M}athematical {D}iffraction {T}heory}.
\newblock CRC Press, Boca Raton, 1 edition, 2004.
\newblock \url{https://doi.org/10.1201/9780203643488}.

\bibitem{ThaoHVA2014MMA}
N.~X. Thao and H.~T. VanAnh.
\newblock On the {H}artley--{F}ourier sine generalized convolutions.
\newblock {\em Math. Methods Appl. Sci.}, 37(15):2308--2319, 2014.
\newblock \url{ https://doi.org/10.1002/mma.2980}.

\bibitem{Titchmarsh1986}
E.~C. Titchmarsh.
\newblock {\em Introduction to the {T}heory of {F}ourier {I}ntegrals}.
\newblock Chelsea Publishing Co, New York, 3 edition, 1986.

\bibitem{tuan2020Ukrainian}
T.~Tuan.
\newblock On the {F}ourier-sine and {K}ontorovich--{L}ebedev generalized
  convolution transforms and their applications.
\newblock {\em Ukrainian Math. J.}, 72(2):302--316, 2020.
\newblock \url{https://doi.org/10.1007/s11253-020-01782-1}.

\bibitem{tuan2021korean}
T.~Tuan.
\newblock Some classes of integral equations of convolutions-pair generated by
  the {K}ontorovich-{L}ebedev, {L}aplace and {F}ourier transforms.
\newblock {\em Commun. Korean Math. Soc.}, 36(3):485--494, 2021.
\newblock \url{https://doi.org/10.4134/CKMS.c200183}.

\bibitem{tuan2022Mediterranean}
T.~Tuan.
\newblock Operational properties of the {H}artley convolution and its
  applications.
\newblock {\em Mediterr. J. Math.}, 19, 266, 2022.
\newblock \url{https://doi.org/10.1007/s00009-022-02173-5}.

\bibitem{Tuan2022MMA}
T.~Tuan.
\newblock Some results of {W}atson and {P}lancherel--type integral transforms
  related to the {H}artley, {F}ourier convolutions and applications.
\newblock {\em Math. Methods Appl. Sci.}, 45(17):11158--11180, 2022.
\newblock \url{https://doi.org/10.1002/mma.8442}.

\bibitem{Tuan2023VKT}
T.~Tuan and V.~K. Tuan.
\newblock Young inequalities for a {F}ourier cosine and sine polyconvolution
  and a generalized convolution.
\newblock {\em Integral Transforms Spec. Funct.}, 34(9):690--702, 2023.
\newblock \url{https://doi.org/10.1080/10652469.2023.2182776}.

\bibitem{Winer1932}
N.~Wiener.
\newblock Tauberian theorems.
\newblock {\em Annals of Mathematics.}, 33(1):1--100, 1932.
\newblock \url{https://doi.org/10.2307/1968102}.

\bibitem{YoungWH1912}
W.~H. Young.
\newblock On classes of summable functions and their {F}ourier series.
\newblock {\em Proceedings of the Royal Society of London, Series A.},
  87:225--229, 1912.
\newblock \url{https://doi.org/10.1098/rspa.1912.0076}.

\end{thebibliography}
\nocite{hoangtuan2017thaovkt}
\nocite{Winer1932}
\nocite{AdamsFournier2003sobolev}
\nocite{Appell2000KalZabre}
\nocite{Barbashin1957}
\nocite{NaimarkMA1972}
\nocite{Sneddon1972}
\nocite{Sogge1993fourier}
\nocite{Bracewell1986}
\nocite{Titchmarsh1986}
\nocite{Tuan2022MMA}
\nocite{tuan2020Ukrainian}
\nocite{bateman1954}
\nocite{YoungWH1912}
\nocite{WRudin1987}
\nocite{Pruss2013evolutionary}
\nocite{ThaoHVA2014MMA}
\nocite{Poularikas1996}
\nocite{tuan2021korean}
\nocite{Srivastava1992Buschman}
\nocite{Mezhlum2004Antonio}
\nocite{Stein1971Weiss}
\nocite{Debnath2006Bhatta}
\nocite{castro2013}
\nocite{giang2009MauTuan}
\nocite{castro2019NMTuan}
\nocite{Tuan2023VKT}
\nocite{tuan2022Mediterranean}
\nocite{Saitoh2000}
\end{document}